\theoremstyle{plain}
\newtheorem{theorem}{Theorem}
\newtheorem{lemma}{Lemma}
\newtheorem{proposition}{Proposition}
\theoremstyle{definition}
\newtheorem{definition}{Definition}
\newtheorem{remark}{Remark}
\newcommand\naturals{\mathbb{N}}
\newcommand\integers{\mathbb{Z}}
\newcommand\reals{\mathbb{R}}
\newcommand\complexes{\mathbb{C}}
\newcommand\norm[1]{\left\lVert#1\right\rVert}
\newcommand\ev[2]{\left<#1,#2\right>}
\DeclareMathOperator{\supp}{supp}
\newcommand\bigoh[1]{O\left(#1\right)}
\newcommand\ds{\mathcal{D}}
\newcommand\es{\mathcal{E}}
\newcommand\des{\es^{\prime}}
\newcommand\despaceomega[1]{\testspaceomega{\des}}
\newcommand\beurou{\ast}
\newcommand\utestspace[3]{#1^{#2}(#3)}
\newcommand\utestspacereals[3]{\utestspace{#1}{#2}{\reals^{#3}}}
\newcommand\utestspacereal[2]{#1^{#2}}
\newcommand\udspacereals[2]{\utestspacereals{\ds}{#1}{#2}}
\newcommand\uespacereals[2]{\utestspacereals{\es}{#1}{#2}}
\newcommand\uddspacereals[2]{\utestspacereals{\ds}{\prime #1}{#2}}
\newcommand\uddspacereal[1]{\utestspacereal{\ds}{\prime #1}}
\newcommand\udespacereals[2]{\utestspacereals{\es}{\prime #1}{#2}}
\begin{document}

\title[Asymptotic boundedness and moment asymptotic expansion]{Asymptotic boundedness and moment asymptotic expansion in ultradistribution spaces}

\author[L. Neyt]{Lenny Neyt}
\thanks{L. Neyt gratefully acknowledges support by Ghent University, through the BOF-grant 01J11615.}

\author[J. Vindas]{Jasson Vindas}
\thanks {J. Vindas was supported by Ghent University through the BOF-grants 01J11615 and 01J04017.}

\address{Department of Mathematics: Analysis, Logic and Discrete Mathematics\\ Ghent University\\ Krijgslaan 281\\ 9000 Gent\\ Belgium}
\email{lenny.neyt@UGent.be}
\email{jasson.vindas@UGent.be}

\dedicatory{Dedicated to the memory of Prof. Bogoljub Stankovi\'{c}}

\subjclass[2010]{46F05; 46F10}

 \keywords{Asymptotic behavior of generalized functions; moment asymptotic expansions; S-asymptotics; quasiasymptotics; ultradistribution spaces}

\begin{abstract}
	We obtain structural theorems for the so-called S-asymptotic and quasiasymptotic boundedness of ultradistributions. Using these results, we then analyze the moment asymptotic expansion (MAE), providing a full characterization of those ultradistributions satisfying this asymptotic formula in the one-dimensional case. We also introduce and study a uniform variant of the MAE. \end{abstract}

\maketitle

\section{Introduction}

In this article we study asymptotic properties of ultradistributions. Asymptotic analysis is an important subject in the theory of generalized functions and provides powerful tools for applications in areas such as mathematical physics, number theory, and differential equations. The theory of asymptotic behavior of generalized functions has been particularly useful in the study of Tauberian theorems for integral transforms.   We refer to the monographs \cite{estrada2012distributional, stevan2011asymptotic, vladimirov1988tauberian} for complete accounts on the subject and its many applications. 

The asymptotic behavior of a generalized function is usually analyzed via its parametric behavior, mostly with respect to translation or dilation. The idea of looking at the translates of a distribution goes back to Schwartz \cite[Chapter VII]{Schwartz}, who used it to measure the order of growth of tempered distributions at infinity. Pilipovi\'{c} and Stankovi\'{c} later introduced a generalization, the so called \emph{S-asymptotic} behavior, and thoroughly investigated its properties for distributions, ultradistributions, and Fourier hyperfunctions. There are deep connections between S-asymptotics and Wiener Tauberian theorems for generalized functions \cite{pilipovic-stankovic94}. In Section \ref{Section S-asymptotic boundedness} of this article we study the structure of S-asymptotically bounded ultradistributions; in fact, we discuss a counterpart of the main structural result from \cite{PropSasymp} (cf. \cite[Theorem~1.10, p.~46]{stevan2011asymptotic}) for S-asymptotic boundedness, which we prove to hold under much weaker assumptions than those employed there. 

There are two very well-established approaches to asymptotics of generalized functions related to dilation. The first one is the \textit{quasiasymptotic} behavior, which employs regularly varying functions \cite{bingham1989regular} as gauges in the asymptotic comparisons. This concept was first introduced by Zav'yalov for Schwartz distributions \cite{Zavyalov} and further developed by him, Drozhzhinov, and Vladimirov in \cite{vladimirov1988tauberian}. The quasiasymptotic behavior was naturally extended to the context of one-dimensional ultradistributions in \cite{10.2307/44095755}. Here we shall provide complete structural theorems for quasiasymptotically bounded ultradistributions in Section \ref{sec:quasiasympbound}, both at infinity and the origin, using the above-mentioned results for S-asymptotic boundedness and techniques developed by the authors in \cite{structquasiultra}.  These results are the ultradistributional analogs of the structural theorems from \cite{VindStrucQbounded} for Schwartz distributions (cf. \cite[Section~2.12, p.~160]{stevan2011asymptotic}). 

The second important approach to asymptotic behavior related to dilation is the so-called \textit{moment asymptotic expansion} (MAE), whose properties have been extensively investigated by Estrada and Kanwal \cite{E-K-AsympExpansions, estrada2012distributional}. Some recent contributions can be found in \cite{Schmidt2005, yang-estrada}. A generalized function $f$ is said to satisfy the MAE if there is a certain multisequence $\{\mu_{\alpha}\}_{\alpha \in \naturals^{d}}$, called the moments of $f$, such that the following asymptotic expansion holds
	\begin{equation}
		\label{eq:MAEasymp}
		f(\lambda x) \sim \sum_{\alpha \in \naturals^{d}} \frac{(-1)^{|\alpha|} \mu_{\alpha} \delta^{(\alpha)}(x)}{\alpha! \lambda^{|\alpha| + d}} , \qquad \lambda \rightarrow \infty . 
	\end{equation}
As is shown in the monograph \cite{estrada2012distributional}, the MAE supplies a unified approach to several aspects of asymptotic analysis and its applications. Interestingly, Estrada characterized \cite{Estrada 1998} the largest space of distributions where the MAE holds as the dual of the space of so-called GLS symbols \cite{GLS1968}. We will consider the MAE for ultradistributions, providing in Section \ref{Section MAE}  a counterpart of Estrada's full characterization in the one-dimensional case. We shall also study a uniform version of (\ref{eq:MAEasymp}) in Section \ref{section UMAE}, which we call the UMAE. Our considerations naturally lead to introduce the ultradistribution spaces $\mathcal{K}'^{\ast}(\mathbb{R}^{d})$ and $\mathcal{K}'^{\ast}_{\dagger}(\mathbb{R}^{d})$, which are intimately connected with the MAE and UMAE. We note that in even dimension  our space $\mathcal{K}'^{\ast}_{\dagger}(\mathbb{R}^{2d})$ arises as the dual of one of the spaces of symbols of `infinite order' pseudo-differential operators from \cite{BojanPseudo}.

\section{Preliminaries}\label{section preliminaries}

In this section we fix the notation and briefly collect some background material on ultradistributions. Given a weight sequence $\{M_{p}\}_{p \in \naturals}$ of positive real numbers, we will often make use of one or more of the following conditions:
	\begin{description}
		\item[$(M.1)$] $M_{p}^{2} \leq M_{p - 1} M_{p + 1}$, $p \geq 1$; 
		
		\item[$(M.2)'$] $M_{p + 1} \leq A H^{p} M_{p}$, $p \in \naturals$, for certain constants $A$ and $H$;
		\item[$(M.2)$] $M_{p+q} \leq A H^{p+q} M_{p} M_{q}$, $p,q \in \naturals$, for certain constants $A$ and $H$;
	        \item[$(M.3)'$] $\sum_{p = 1}^{\infty} M_{p-1}/M_p < \infty$;
		\item[$(M.3)$] $ \sum_{p=q}^{\infty} M_{p-1}/M_p \leq c_0 q M_{q-1}/M_q$, $q\geq 1$, for a certain constant $c_0$.
	\end{description}
The meaning of all these conditions is very well explained in \cite{ultradistributions1}. 
 For multi-indices $\alpha \in \naturals^{d}$, we will simply denote $M_{|\alpha|}$ as $M_{\alpha}$.

Let $\Omega\subset \mathbb{R}^{d}$ be open and let $K \subset \reals^{d}$ be a (regular) compact subset. For any $\ell > 0$ we denote as $\mathcal{E}^{M_{p}, \ell}(K)$ the space of all smooth functions $\varphi$ for which 
$$\norm{\varphi}_{\mathcal{E}^{M_{p}, \ell}(K)} := \sup_{\alpha \in \naturals^{d}} \sup_{x \in K} \frac{ |\varphi^{(\alpha)}(x)| }{\ell^{|\alpha|} M_{\alpha}}$$
is finite. Then, as customary \cite{ultradistributions1}, we set
$$
 \mathcal{E}^{(M_p)}(\Omega)=\varprojlim_{K \Subset \Omega} \varprojlim_{\ell>0}\mathcal{E}^{M_{p}, \ell}(K) \qquad \mbox{and} \qquad \mathcal{E}^{\{M_p\}}(\Omega)= \varprojlim_{K \Subset \Omega} \varinjlim_{\ell>0}\mathcal{E}^{M_{p}, \ell}(K) ,
$$
and use  $\mathcal{E}^{\beurou}(\Omega)$ as the common notation for the Beurling and Roumieu case (when a separate treatment is needed, we will always first state assertions about the Beurling case $(M_{p})$, then followed by the Roumieu case $\{M_{p}\}$ in parenthesis). 

We shall make use of Komatsu's well-known projective limit description of $\mathcal{E}^{\{M_p\}}(\Omega)$. Consider the directed set 
		$\mathfrak{R} = \left\{ (\ell_{p})_{p \in \integers_{+}} : \ell_{p} \nearrow \infty \text{ and } \ell_{p} > 0, \forall p \in \integers_{+} \right\} .$ For any $(\ell_{p}) \in \mathfrak{R}$ we write $L_{0} = 1$ and $L_{p} = \prod_{j = 1}^{p} \ell_{j}$ for $p \in \integers_{+}$. Then, it is shown in \cite[Proposition 3.5, p.~675]{ultradistributions3} that if the weight sequence satisfies $(M.1)$, $(M.2)'$, and $(M.3)'$, then
\begin{equation}
\label{project limit E}
\mathcal{E}^{\{M_p\}}(\Omega)= \varprojlim_{(\ell_{p}) \in \mathfrak{R}} \mathcal{E}^{(L_{p}M_{p})}(\Omega)
\end{equation}		
as locally convex spaces.

For compactly supported test functions our notation is standard, we write $\mathcal{D}^{\ast} (\Omega)=\mathcal{E}^{\ast} (\Omega)\cap\mathcal{D} (\Omega)$ and $\mathcal{D}^{M_p,\ell}_{K}=\mathcal{E}^{M_p,\ell}(K)\cap \mathcal{D}_{K}$ with $K\subset \Omega$ compact, these spaces are topologized in the canonical way \cite{ultradistributions1}. (Naturally, by the Denjoy-Carleman theorem and under $(M.1)$, the non-triviality of $\mathcal{D}^{\ast} (\Omega)$ is equivalent to $(M.3)'$.) 
The strong dual $\mathcal{D}'^{\ast} (\Omega)$ is the space of ultradistributions of class $\ast$, while the elements of $\mathcal{E}'^{\ast} (\Omega)$ are exactly those ultradistributions with compact support.

\section{S-asymptotic boundedness}\label{Section S-asymptotic boundedness}
 Our aim in this section is to study the structure of those ultradistributions that satisfy 
\begin{equation}
\label{eq:Sasympboundcone}
				f(x + h) = \bigoh{\omega(h)}, \quad  h \in W , \quad \mbox{in } \mathcal{D}'^{\ast}(\mathbb{R}^{d}),
			\end{equation}
where $W\subseteq \mathbb{R}^{d}$ is simply an unbounded set  and $\omega$ is a positive function. The S-asymptotic relation is to be interpreted in the ultradistributional sense \cite{stevan2011asymptotic}, that is, it explicitly means that for each test function $\varphi\in\mathcal{D}^{\ast}(\mathbb{R}^{d}),$
\begin{equation}
\label{eq:Sasympboundconetest}
\sup_{h\in W}\frac{\ev{f(x+h)}{\varphi(x)}}{\omega(h)}=\sup_{h\in W}\frac{(f\ast \check{\varphi})(h)}{\omega(h)}<\infty.
\end{equation}

We begin with a simple auxiliary lemma that allows us to preserve certain growth properties when regularizing functions. Given $R>0$ and a set $W$, we denote as $W_R$ the open $R$-neighborhood of $W$, that is, the set $W_R=W + B(0,R)$.

\begin{lemma}\label{lemma regularizing L^infinity}  Given $R>0$ there are absolute constants $c_{0,R}$ and $c_{1,R}$ such that each function $g\in L^{\infty}_{loc}(W_{R})$ satisfying the bound $\sup_{x\in W,\: |h|< R} |g(x+h)|/\omega(x)<\infty$, where $W\subset\mathbb{R}^{d}$ and $\omega$ is a positive function defined on $W$, can be written as  $g=\Delta g_{1}+g_0$ in $W_R$ for some functions $g_{j}\in C(\mathbb{R}^{d})$ that satisfy
$$
\sup_{x\in W} \frac{|g_{j}(x)|}{\omega(x)}\leq c_{j,R}\sup_{x\in W,\ |h|<R} \frac{|g(x+h)|}{\omega(x)}, \qquad j=0,1.
$$

\end{lemma}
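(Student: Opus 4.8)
The plan is to realize $g$ through a compactly supported parametrix of the Laplacian. Fix, once and for all, a cut-off $\zeta\in\mathcal{D}(\reals^{d})$ with $\supp\zeta\subseteq\overline{B}(0,\varepsilon')$ for some $\varepsilon'<\varepsilon$ and with $\zeta\equiv1$ on a neighborhood of the origin; for concreteness one may take $\zeta(x)=\zeta_{0}(x/\varepsilon)$ for a fixed $\zeta_{0}\in\mathcal{D}(\reals^{d})$ supported in the unit ball and equal to $1$ near $0$, so that all constants produced below depend only on $\varepsilon$ (and $d$). Let $E$ denote the standard, locally integrable, fundamental solution of the Laplacian on $\reals^{d}$ and put $\phi:=E\zeta$. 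Then $\phi\in L^{1}(\reals^{d})$ is compactly supported in $\overline{B}(0,\varepsilon')$, and, since $\zeta(0)=1$ while $\nabla\zeta$ and $\Delta\zeta$ vanish near the singularity of $E$, Leibniz' rule yields $\Delta\phi=\delta+\theta$, where $\theta:=2\nabla E\cdot\nabla\zeta+E\Delta\zeta\in\mathcal{D}(\reals^{d})$ with $\supp\theta\subseteq\overline{B}(0,\varepsilon')$.

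Now I would simply set $g_{1}:=\phi*g$ and $g_{0}:=-\theta*g$. Because $\supp\phi,\supp\theta\subseteq\overline{B}(0,\varepsilon')$ and $g\in L^{\infty}_{loc}(V_{\varepsilon})$, for each $x\in V$ the translate $x+\overline{B}(0,\varepsilon')$ is a compact subset of $V_{\varepsilon}$, so both convolutions are well defined on $V$; moreover $g_{0}\in C^{\infty}(V)$ and $g_{1}\in C(V)$, the latter because the convolution of an $L^{1}$ function with a locally bounded one is continuous. On $V$ one then has, in the sense of distributions, $\Delta g_{1}=(\Delta\phi)*g=(\delta+\theta)*g=g+\theta*g$, hence $g=\Delta g_{1}-\theta*g=\Delta g_{1}+g_{0}$, which is the asserted decomposition.

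For the estimates, put $M:=\sup_{x\in V,\,|h|\leq\varepsilon}|g(x+h)|/\omega(x)$, finite by hypothesis. For $x\in V$ and $|h|\leq\varepsilon'$, applying the definition of $M$ with the shift $-h$ gives $|g(x-h)|\leq M\,\omega(x)$, whence
$$ |g_{1}(x)|\leq\int_{\overline{B}(0,\varepsilon')}|\phi(h)|\,|g(x-h)|\,dh\leq\norm{\phi}_{L^{1}}\,M\,\omega(x), $$
and likewise $|g_{0}(x)|\leq\norm{\theta}_{L^{1}}\,M\,\omega(x)$. Thus $g_{0},g_{1}\in L^{\infty}_{\omega}(V)$ and the claimed bounds hold with the absolute constants $c_{1,\varepsilon}:=\norm{E\zeta}_{L^{1}}$ and $c_{0,\varepsilon}:=\norm{\theta}_{L^{1}}$, which depend only on $\varepsilon$ (and $d$), not on $g$, $V$ or $\omega$.

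There is no serious obstacle here; the two points that must be handled with some care are, first, that $\phi=E\zeta$ is only a parametrix rather than a genuine right inverse of $\Delta$ — which is precisely why the correction term $g_{0}=-\theta*g$ appears and cannot be avoided in a purely local construction — and, second, the support bookkeeping ensuring that $g_{1}(x)$ and $g_{0}(x)$ involve only the values of $g$ on $V_{\varepsilon}$ and that the constants come out independent of the data; both are taken care of by requiring $\supp\zeta$ to lie strictly inside $B(0,\varepsilon)$. Everything else is routine.
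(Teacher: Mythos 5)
Your proposal is correct and follows essentially the same route as the paper: both cut off a fundamental solution of the Laplacian inside $B(0,\varepsilon)$ to obtain a compactly supported parametrix $\delta=\Delta\chi_{1}+\chi_{0}$ with $\chi_{1}\in L^{1}$ and $\chi_{0}$ smooth, and then set $g_{j}=g\ast\chi_{j}$ (your $\phi$ and $-\theta$ are exactly these $\chi_{1}$ and $\chi_{0}$), with the bounds coming from the $L^{1}$ norms of the kernels and the translation-invariance hypothesis on $\omega$.
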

\begin{proof}
To show this,  we make use of the fact that the fundamental solutions of the Laplacian belong to $L^{1}_{loc}(\mathbb{R}^{d})\cap C^{\infty}(\mathbb{R}^{d}\setminus\{0\})$. By cutting-off a fundamental solution in the ball $B(0,R)$, this implies we can select functions $\chi_1\in L^{1}(\mathbb{R}^{d})$ and $\chi_0\in \mathcal{D}(\mathbb{R}^{d})$ both supported on $B(0,R)$, such that $\delta=\Delta \chi_1+\chi_0$. Extend $g$ off $W_{R}$ as 0 and keep calling this extension by $g$. We obtain the claim if we set $g_j=g\ast\chi_{j}$ so that the desired inequalities hold with $c_{j,R}=\int_{|x|\leq R} |\chi_{j}(-x)|dx.$

\end{proof}

 We can now obtain our first structural theorem for \eqref{eq:Sasympboundcone} in the case where the weight sequence satisfies mild hypotheses. For it, we shall need to impose the following regularity condition on the gauge function $\omega$, 

\begin{equation}
		\label{eq:omegacond}
		\sup_{x\in\mathbb{R}^{d}} \frac{\omega(\: \cdot\: +x)}{\omega(x)}\in L^{\infty}_{loc}(\mathbb{R}^{d}).
	\end{equation}
 In the proof of the next theorem we employ a technique of G\'{o}mez-Collado that she applied to obtain various characterizations of the space of bounded ultradistributions in  \cite{Gomez-Collado}.
	
	\begin{theorem}\label{structural th S-asymptotic}
	
		Let $W\subset \mathbb{R}^{d}$ be an unbounded set and let $\omega$ be a positive measurable function on $\mathbb{R}^{d}$ that satisfies \eqref{eq:omegacond}. Suppose $(M.1)$, $(M.2)'$, and $(M.3)'$ hold. Then, an ultradistribution $f \in \uddspacereals{\beurou}{d}$ satisfies \eqref{eq:Sasympboundcone} if and only if for each $R>0$ there are continuous functions $\{f_{\alpha}\}_{\alpha\in\mathbb{N}^{d}}$ defined on $W_{R}$ such that for some $\ell>0$ (for each $\ell>0$) there exists $ C_{\ell} > 0$ for which the bounds	
			\begin{equation}
				\label{eq:Bstructbounds}
				\left|f_{\alpha}(x)\right| \leq C_{\ell} \frac{\ell^{|\alpha|}}{M_{\alpha}} \omega(x) , \qquad  x \in W_{R},\ \alpha \in \naturals^{d} , 
			\end{equation}
 hold and
			\begin{equation}
				\label{eq:Bstruct}
				f = \sum_{\alpha \in \naturals^{d}} f^{(\alpha)}_{\alpha} \qquad \mbox{ in } W_R.
			\end{equation}		
	\end{theorem}

	\begin{proof}
		The sufficiency of the conditions is easily verified. For the necessity, we start by making some reductions. In view of Lemma \ref{lemma regularizing L^infinity} and the assumption $(M.2)'$, it suffices to establish \eqref{eq:Bstruct} with \eqref{eq:Bstructbounds} for functions that are merely measurable. Next, we show that we may assume that $W=\mathbb{R}^{d}$.
		Let $R > 0$ be arbitrary and let $\chi_{R}$ be a non-negative smooth function on $\reals^{d}$ such that $0 \leq \chi_{R} \leq 1$, $\chi_{R} = 1$ on $W_{R}$ while $\chi_{R} = 0$ outside $W_{2R}$, and such that
			\[ \sup_{\alpha \in \naturals^{d},\: \xi \in \reals^{d}} \frac{|\chi_{R}^{(\alpha)}(\xi)|}  {\ell^{|\alpha|}   M_{\alpha}} < \infty , \qquad \forall \ell > 0 . \]
	Then, we set $\widetilde{f} := \chi_{R} \cdot f$ and notice that $\widetilde{f}$ and $f$ coincide on $W_{R}$. Take any $\varphi \in \udspacereals{\beurou}{d}$ and let $r > 0$ be such that $\supp \varphi \subset B(0, r)$. Take any $h \in \reals^{d}$. If $h \notin W_{r + 2 R}$ then $\ev{\widetilde{f}(x + h) / \widetilde{\omega}(h)}{\varphi(x)} = 0$. Suppose now $h \in W_{r + 2 R}$, then $h = h_{1} + h_{2}$ with $h_{1} \in W$ and $h_{2} \in B(0, r + 2 R)$. Then, employing \eqref{eq:omegacond} and the Banach-Steinhaus theorem,
				\[ \frac{|\ev{\widetilde{f}(x + h)}{\varphi(x)}|}{\widetilde{\omega}(h)} \leq C_{R + 2\varepsilon} \frac{|\ev{\widetilde{f}(x + h_{1})}{\varphi(x - h_{2})}|}{\omega(h_{1})} = \bigoh{1} , \]
 because $\{T_{h_{2}} \varphi : h_{2} \in B(0, r + 2 R)\}$ is a bounded family in $\udspacereals{\beurou}{d}$. Consequently, $\widetilde{f}(x+h)=O(\omega(h))$, $h\in\mathbb{R}^{d}$, in $\mathcal{D}'^{\ast}(\mathbb{R}^{d})$
 
 We may therefore assume w.l.o.g. that $\tilde{f}=f$ and derive \eqref{eq:Bstructbounds} and \eqref{eq:Bstruct} on the whole $\mathbb{R}^{d}$ for measurable functions $f_{\alpha}$  under the hypothesis that $\{f(x+h)/\omega(h):\: h\in\mathbb{R}^{d}\}$ is a bounded subset of $\mathcal{D}'^{\ast}(\mathbb{R}^{d})$. We now reason as in \cite{Gomez-Collado}. Let $\psi\in\mathcal{D}^{\ast}_{[-1,1]^{d}}$ be such that $\sum_{n\in\mathbb{Z}^{d}}\psi(x-n)=1$ for each $x\in\mathbb{R}^{d}$. We have that $\{\psi f(\:\cdot\:+n)/\omega(n):\: n\in\mathbb{Z}^{d}\}$ is now a bounded set in the space $\mathcal{E}'^{\ast}(\mathbb{R}^{d})$. Using $(M.2)'$, we obtain in the Beurling case the existence of $\ell>0$ and in the Roumieu case of $(\ell_{p})\in\mathfrak{R}$ such that for some $C>0$ and all $n\in\mathbb{Z}^{d}$ and $\phi\in\mathcal{E}^{\ast}(\mathbb{R}^{d})$
 \begin{equation}
 \label{eq:auxStrucSB}
| \langle f(x), \psi(x-n)\phi(x-n) \rangle| \leq C \sum_{\alpha\in\mathbb{N}^{d}} \frac{\omega(n)}{L_{\alpha}M_\alpha}\int_{[-1,1]^{d}} |\phi^{(\alpha)}(x)|dx,
 \end{equation}
 where in the Roumieu case we have used the projective description \eqref{project limit E} and we have set $L_{p}=\ell^{p}$ in the Beurling case and $L_{p}=\prod_{j\leq p}\ell_{p}$ in the Roumieu one. We further on consider the Banach space $X$ of all $\varphi\in C^{\infty}(\mathbb{R}^d)$ such that
  \[\norm{\varphi}_{X}=\sum_{\alpha\in\mathbb{N}^{d}}\int_{\mathbb{R}^{d}} |\varphi^{(\alpha)}(x)| \frac{ \omega(x)}{L_{\alpha}M_{\alpha}}dx <\infty.\] 
 Let $\varphi\in\mathcal{D}^{\ast}(\mathbb{R}^{d})$ be arbitrary. Applying \eqref{eq:auxStrucSB} to each  $\phi(x)=\varphi(x+n)$ and using the hypothesis \eqref{eq:omegacond}, we obtain, with $C'=C\sup_{x\in\mathbb{R}^{d}, \ y\in [-1,1]^{d}} \omega(x+y)/\omega(x)$,
\begin{align*}
|  \langle f, \varphi \rangle|& \leq \sum_{n\in\mathbb{Z}^{d}}\left|\langle f(x), \psi(x-n)\varphi(x) \rangle\right|\leq  3^{d}C'  \norm{\varphi}_{X}.
\end{align*}
Note that $(M.1)$ and $(M.3)'$ ensure that $\mathcal{D}^{\ast}(\mathbb{R}^{d})$ is dense in $X$ and hence we conclude $f\in X'$. Embedding $X$ into $L^{1}(\mathbb{N}^{d}\times \mathbb{R}^{d}, d\mu)$ via the isometry $j(\varphi)(\alpha,x)= (-1)^{|\alpha|}\varphi^{(\alpha)}(x)$, where the measure is given by $d\mu= \omega(x)/(L_{\alpha}M_{\alpha}) d\alpha dx$ with $d\alpha$ the natural counting measure on $\mathbb{N}^{d}$, we can apply the Hanh-Banach theorem to get the representation \eqref{eq:Bstruct} with measurable functions $f_{\alpha}$ on $\mathbb{R}^{d}$ that satisfy bounds $|f_{\alpha}(x)|\leq C'' \omega(x)/(L_{\alpha}M_{\alpha})$. This yields already the result in the Beurling case. In the Roumieu case we finally employ \cite[Lemma~3.4(ii), p.~674]{ultradistributions3} to obtain the bounds
 \eqref{eq:Bstructbounds} for each $\ell>0$ and some $C_{\ell}>0$.

	\end{proof}

In applications it is very useful to combine Theorem \ref{structural th S-asymptotic} with the ensuing proposition, which provides conditions under which one might essentially apply Theorem \ref{structural th S-asymptotic} with a function $\omega$ that is just defined on the set $W$. 

	\begin{proposition}
		\label{p:extendweightfunc}
		Let $W\subset \mathbb{R}^{d}$ be a closed convex set. Any positive function $\omega$ on $W$ satisfying
			\begin{equation}
				\label{eq:omegacondgeneral}
				(\forall R > 0) \quad \underset{|h|\leq R}{\sup_{x,\: x+h\in W}} \frac{\omega(x+h)}{\omega(x)}<\infty 
						\end{equation}
can be extended to a positive function  on $\reals^{d}$ satisfying \eqref{eq:omegacond}. In addition, if $\omega$ is measurable (or continuous), the extension can be chosen measurable (or continuous) as well.
	\end{proposition}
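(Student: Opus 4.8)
The plan is to extend $\omega$ by precomposing it with the nearest-point (metric) projection onto $W$; this single device simultaneously yields the comparability \eqref{eq:omegacond} and disposes of the continuous case, while measurability calls for one extra observation. We may assume $W\neq\emptyset$. Since $W$ is a non-empty closed convex subset of the Euclidean space $\reals^{d}$, the metric projection $\pi_{W}\colon\reals^{d}\to W$, which assigns to each $x$ the unique point of $W$ closest to it, is well defined, nonexpansive (that is, $|\pi_{W}(x)-\pi_{W}(y)|\leq|x-y|$ for all $x,y\in\reals^{d}$), and fixes $W$ pointwise. Put $\omega^{\ast}:=\omega\circ\pi_{W}$ on $\reals^{d}$; then $\omega^{\ast}>0$ everywhere and $\omega^{\ast}$ extends $\omega$. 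To verify \eqref{eq:omegacond}, fix $R>0$ and let $C_{R}<\infty$ be the supremum in \eqref{eq:omegacondgeneral} associated with this $R$. For $x\in\reals^{d}$ and $|h|\leq R$, set $y:=\pi_{W}(x)\in W$ and $k:=\pi_{W}(x+h)-\pi_{W}(x)$; by nonexpansiveness $|k|\leq|h|\leq R$, and $y+k=\pi_{W}(x+h)\in W$, so that
\[
	\frac{\omega^{\ast}(x+h)}{\omega^{\ast}(x)}=\frac{\omega(y+k)}{\omega(y)}\leq C_{R}.
\]
Thus $\sup_{x\in\reals^{d}}\omega^{\ast}(h+x)/\omega^{\ast}(x)\leq C_{R}$ for all $|h|\leq R$ and all $R>0$, which is exactly \eqref{eq:omegacond}.

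This already settles the continuous case: if $\omega$ is continuous on $W$, then $\omega^{\ast}=\omega\circ\pi_{W}$ is continuous on $\reals^{d}$ as a composition of continuous maps. The same formula also produces a Borel measurable extension when $\omega$ is Borel measurable, since $\pi_{W}$ is continuous and the Borel subsets of the closed set $W$ are Borel in $\reals^{d}$.

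The point requiring the most care --- and which I expect to be the principal obstacle --- is preserving Lebesgue measurability: $\omega\circ\pi_{W}$ need not work, because $\pi_{W}$ maps the possibly positive-measure set $\reals^{d}\setminus W$ onto $\partial W$, which is negligible in $W$ (a null set if $W$ is solid, a lower-dimensional set otherwise), so the a priori arbitrary values of a merely measurable $\omega$ on that negligible set would get spread over a set of positive measure. The fix is to avoid boundary values: for $x\notin W$, replace $\omega(\pi_{W}(x))$ by a local essential supremum of $\omega$ over the points $q\in W$ with $|q-\pi_{W}(x)|\leq1$, computed with respect to the natural Lebesgue measure on the affine hull of $W$ (the case of a one-point $W$ being trivial). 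By \eqref{eq:omegacondgeneral} with $R=1$, this quantity lies between $C_{1}^{-1}\omega(\pi_{W}(x))$ and $C_{1}\omega(\pi_{W}(x))$, so the modified extension is still positive and satisfies \eqref{eq:omegacond} with $C_{R}$ replaced by $C_{1}^{2}C_{R}$, while its Lebesgue measurability is clear because $\pi_{W}$ is continuous and a local essential supremum of a measurable function is measurable.
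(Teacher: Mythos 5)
Your proof is correct and follows essentially the same route as the paper: extend by $\omega\circ\pi_W$ where $\pi_W$ is the nearest-point projection, and deduce \eqref{eq:omegacond} from the nonexpansiveness of $\pi_W$ (which the paper establishes explicitly via the obtuse-angle criterion). Your extra paragraph on Lebesgue versus Borel measurability is a genuine refinement rather than a digression: the paper simply asserts that $\widetilde{\omega}=\omega\circ\pi_W$ ``inherits measurability,'' which is only automatic for Borel measurable $\omega$ (a Lebesgue measurable $\omega$ altered on a null subset of $\partial W$ can compose with $\pi_W$ to a non-measurable function, since $\pi_W$ pulls null sets back to sets of positive measure), and your local-essential-supremum modification correctly repairs this at the harmless cost of replacing $C_R$ by $C_1^2C_R$.
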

	\begin{proof}
		For any $x \in \reals^{d}$ we denote by $\widetilde{x} \in W$ the (unique in view of convexity) closest point to $x$ in $W$. Then, we set $\widetilde{\omega}(x) := \omega(\widetilde{x})$. Since $x\mapsto \widetilde{x}$ is continuous, $\widetilde{\omega}$ inherits measurability or continuity if $\omega$ has the property. We now verify \eqref{eq:omegacond} for $\widetilde{\omega}$. Let $R > 0$ and let $C_{R}$ an upper bound for $\omega(t+y)/\omega(y)$, where $y,t+y\in W$ and $t\in\overline{B}(0, R)$. Let $x \in \reals^{d}$ and $h \in \overline{B}(0, R)$ be arbitrary. Consider the points $x, x + h, \widetilde{x}$ and $\widetilde{x + h}$. By the obtuse angle criterion, the angles defined by the line segments $[x, \widetilde{x}, \widetilde{x + h}]$ and $[x + h, \widetilde{x + h}, \widetilde{x}]$ are at least $\pi/2$, whence $|\widetilde{x} - \widetilde{x + h}| \leq |x - (x + h)| \leq R$. It then follows that $\widetilde{\omega}(x + h) \leq C_{R} \widetilde{\omega}(x)$, as required.
	\end{proof}

If the weight sequence satisfies stronger assumption, one can drop any regularity assumption on $\omega$, as stated in the next result.

\begin{theorem}\label{structural th S-asymptotic, (M.3)}
	
		Let $W\subset \mathbb{R}^{d}$ be an unbounded set and let $\omega$ be a positive function on $W$. Suppose that $(M.1)$, $(M.2)$, and $(M.3)$ hold. An ultradistribution $f \in \uddspacereals{\beurou}{d}$ satisfies \eqref{eq:Sasympboundcone} if and only if for each $R>0$ there are continuous functions $\{f_{\alpha}\}_{\alpha\in\mathbb{N}^{d}}$ defined on $W_{R}$ such that for some $\ell>0$ (for each $\ell>0$) there
exists $ C_{\ell} > 0$ such that
			\begin{equation}
				\label{eq:Bstructboundsibis}
				\left|f_{\alpha}(x+h)\right| \leq C_{\ell} \frac{\ell^{|\alpha|}}{M_{\alpha}} \omega(x) , \qquad  x \in W,\ |h|<R,\ \alpha \in \naturals^{d} ,
				\end{equation}		
 and the representation \eqref{eq:Bstruct} holds. 
		
	\end{theorem}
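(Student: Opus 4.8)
The sufficiency of the stated condition follows exactly as for Theorem~\ref{structural th S-asymptotic}. Given $\varphi\in\mathcal{D}^{\ast}(\mathbb{R}^{d})$, apply the hypothesis with $R$ equal to a radius of a ball containing $\supp\varphi$; for $h\in W$ one has $(f*\check\varphi)(h)=\sum_{\alpha}(-1)^{|\alpha|}\int_{\mathbb{R}^{d}}f_{\alpha}(h+t)\varphi^{(\alpha)}(t)\,dt$, an integral supported where $h+t\in W_{R}$, so that \eqref{eq:Bstructboundsibis} and the ultradifferentiability of $\varphi$ force convergence of the series and yield $(f*\check\varphi)(h)=\bigoh{\omega(h)}$. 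For the necessity one may moreover always enlarge $R$, since \eqref{eq:Bstructboundsibis} and \eqref{eq:Bstruct} for a given $R$ clearly hold also for any smaller value.

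Now fix $R>0$ and choose, as in \cite[Lemma~3.3]{D-N-VMultDimTaub}, a non-negative $\chi\in C^{\infty}(\mathbb{R}^{d})$ with $\chi=1$ on $W_{2R}$, $\supp\chi\subseteq W_{3R}$, and $\sup_{\alpha\in\naturals^{d},\,\xi\in\mathbb{R}^{d}}|\chi^{(\alpha)}(\xi)|\,\ell^{-|\alpha|}M_{\alpha}^{-1}<\infty$ for every $\ell>0$. Put $\widetilde f=\chi f$, so that $\widetilde f=f$ on $W_{2R}$ and $\supp\widetilde f\subseteq W_{3R}$. For $y\in\mathbb{R}^{d}$ and $\varphi\in\mathcal{D}^{\ast}(\mathbb{R}^{d})$ with $\supp\varphi\subseteq\overline{B}(0,\rho)$, the convolution $(\widetilde f*\check\varphi)(y)=\langle f(t),\chi(t)\varphi(t-y)\rangle$ vanishes unless $y\in W_{3R+\rho}$; and if $y=h_{1}+h_{2}$ with $h_{1}\in W$, $|h_{2}|<3R+\rho$, then $t\mapsto\chi(t+h_{1})\varphi(t+h_{1}-y)$ runs over a bounded subset of $\mathcal{D}^{\ast}(\mathbb{R}^{d})$ with supports contained in one fixed ball as $h_{1},h_{2}$ vary, so the Banach--Steinhaus theorem together with \eqref{eq:Sasympboundconetest} gives, for some $C_{\varphi}>0$,
\[ |(\widetilde f*\check\varphi)(y)|\le C_{\varphi}\,\inf\{\,\omega(x):x\in W,\ |y-x|<3R+\rho\,\},\qquad y\in W_{3R+\rho}. \]

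The remaining and essential task is to represent $f$ near $W$ as a sum $\sum_{\alpha}f_{\alpha}^{(\alpha)}$ with \emph{continuous} $f_{\alpha}$ obeying the pointwise bounds \eqref{eq:Bstructboundsibis}; this is where the stronger hypotheses $(M.1)$, $(M.2)$, $(M.3)$ are used, for they are precisely what makes the parametrix method available. Employing a parametrix for $\delta$ of class $\ast$ — iterated, so that its kernel $E$ may be taken continuous and compactly supported with $\supp E\subseteq\overline{B}(0,R)$, the iteration being legitimate because ultradifferential operators of class $\ast$ compose to operators of class $\ast$ (a consequence of $(M.2)$) — one writes $\delta=Q(D)E+\Theta_{0}$ with $Q(D)=\sum_{\alpha}c_{\alpha}D^{\alpha}$ an ultradifferential operator of class $\ast$ and $\Theta_{0}\in\mathcal{D}^{\ast}(\mathbb{R}^{d})$, $\supp\Theta_{0}\subseteq\overline{B}(0,R)$. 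Convolving with $\widetilde f$ and using that $Q(D)$, $E$, $\Theta_{0}$ all act within balls of radius $R$, this becomes $f=Q(D)g+(\Theta_{0}*\widetilde f)$ on $W_{R}$, with $g:=E*\widetilde f$. By the structure theory of ultradistributions applied along $W$ (via a locally finite bounded open cover of $W_{3R}$ with a subordinate partition of unity) together with the estimate of the previous step, applied with the fixed radius $\rho$ determined by $E$ and $\Theta_{0}$, the functions $g$ and $\Theta_{0}*\widetilde f$ are continuous and satisfy $|g(y)|+|(\Theta_{0}*\widetilde f)(y)|\le C\,\omega(x)$ whenever $x\in W$, $|y-x|<R$. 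Setting $f_{0}:=c_{0}g+\Theta_{0}*\widetilde f$ and $f_{\alpha}:=c_{\alpha}g$ for $\alpha\ne 0$ gives \eqref{eq:Bstruct} on $W_{R}$, and the decay of the $c_{\alpha}$ yields the bounds \eqref{eq:Bstructboundsibis}: in the Beurling case for one (hence some) $\ell$, while in the Roumieu case the bound for every $\ell$ is obtained just as in the proof of Theorem~\ref{t:Bstruct}, through the projective description of Lemma~\ref{l:Bproj} and \eqref{eq:R*someLp}.

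The main obstacle is the passage just described: the hypothesis only controls $\widetilde f$ against test functions of a prescribed support radius, with a constant depending on the test function, whereas the conclusion demands a single decomposition of $f$ into continuous pieces pointwise dominated by $\omega(x)$ for the nearby $x\in W$; bridging this gap forces one to invoke a parametrix whose kernel is continuous and of bounded reach — something unavailable under the weaker $(M.2)'$, $(M.3)'$ of Theorem~\ref{structural th S-asymptotic} — which is the structural reason the present statement requires $(M.2)$ and $(M.3)$. The accompanying technicalities (the $\ell$-book-keeping in the Roumieu case, a uniform choice of $Q$ over the partition of unity, and the case in which $\omega$ fails to be bounded below, which is covered by the same estimates) are routine once this is in place.
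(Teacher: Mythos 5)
Your overall strategy (reduce to a neighborhood of $W$, then apply a parametrix) is the right one and matches the paper's in spirit, but the central step has a genuine gap. First, a parametrix whose kernel $E$ is merely \emph{continuous} is of no use here: $\widetilde f$ is an ultradistribution of infinite order, so $E*\widetilde f$ is not even defined for continuous compactly supported $E$, and no amount of iterating an operator of class $\ast$ will produce a continuous kernel that $\widetilde f$ can act on. What Komatsu's construction actually provides (and what the paper uses) is an ultradifferential operator $P(D)$ of class $\ast$ admitting a parametrix $\varphi\in\mathcal{D}^{M_pL_p,1}_{\overline{B}(0,R)}$, i.e.\ an ultradifferentiable kernel of a slightly \emph{larger} Denjoy--Carleman class $M_pL_p$, together with $\chi\in\mathcal{D}^{\ast}_{\overline{B}(0,R)}$ such that $\delta=P(D)\varphi+\chi$. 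Second, and this is where your argument is circular, your Banach--Steinhaus step only yields bounds $|(\widetilde f*\check\varphi)(y)|\leq C_{\varphi}\,\omega(x)$ with a constant depending on each individual $\varphi\in\mathcal{D}^{\ast}$; since the parametrix kernel does not belong to $\mathcal{D}^{\ast}$, this estimate never applies to it, and the appeal to ``the structure theory of ultradistributions applied along $W$'' to get continuity and $\omega$-bounds for $g=E*\widetilde f$ assumes exactly what the theorem is meant to prove. The missing idea is to apply the closed graph theorem to the map $A:\mathcal{D}^{\ast}(\mathbb{R}^{d})\to X$, $A\varphi=f*\varphi$, where $X$ is the Banach space of functions on $W$ dominated by $\omega$; continuity of $A$ into a Banach space forces it to factor through a single step $\mathcal{D}^{M_pL_p,1}_{\overline{B}(0,2R)}$ of the (inductive/projective) limit, for some $\ell>0$ resp.\ $(\ell_p)\in\mathfrak{R}$. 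Only \emph{then} does one choose Komatsu's parametrix in precisely that Banach space, so that $g=f*\varphi$ inherits the uniform estimate $\sup_{h\in W,\,|x|<R}|(f*\varphi)(x+h)|/\omega(h)<\infty$ (continuity on $W_R$ following from compactness of $\{T_x\varphi:|x|\leq R\}$ in $\mathcal{D}^{M_pL_p,1}_{\overline{B}(0,2R)}$), and $f=P(D)g+f*\chi$ gives \eqref{eq:Bstruct} with the bounds \eqref{eq:Bstructboundsibis}. Incidentally, with this formulation the cut-off $\widetilde f=\chi f$ is unnecessary, since $A$ maps directly into functions on $W$.
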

\begin{proof}
The proof is similar to that of \cite[Theorem 1.10, p. 46]{stevan2011asymptotic}, but we provide some simplifications. The converse is easy to show, so we concentrate on showing the necessity of the conditions for the S-asymptotic boundedness relation \eqref{eq:Sasympboundcone}. Let $R>0$. We consider the linear mapping $A:\mathcal{D}^{\ast}(\mathbb{R}^{d})\to X$, with values in the Banach space $X=\{g:W\to \mathbb{C}:\: \sup_{x\in W} |g(x)|/\omega(x)<\infty\}$, given by $A\varphi= f\ast\varphi.$ It follows from the closed graph theorem that $A$ is continuous.  Consequently, we obtain from the Banach-Steinhaus theorem the existence of $\ell>0$ in the Beurling case or $(\ell_p)\in\mathfrak{R}$ in the Roumieu case such that $A\in(\mathcal{D}_{\overline{B}(0,2R)}^{M_pL_p,1})'$ and $f\ast \varphi \in X$ for each $\varphi \in \mathcal{D}_{\overline{B}(0,2R)}^{M_pL_p,1}$, where we set $L_p=\ell^{p}$ in the Beurling case or $L_p=\prod_{j=1}^{p}\ell_p$ in the Roumieu case. Since for each $\varphi \in \mathcal{D}_{\overline{B}(0,R)}^{M_pL_p,1}$ the set $\{T_{x}\varphi:\: |x|\leq R\}$ is compact in $\mathcal{D}_{\overline{B}(0,2R)}^{M_pL_p,1}$, we conclude that for any such a $\varphi$ the function $f\ast \varphi$ is continuous on $W_{R}$ and
$$
\sup_{h\in W,\: |x|< R}\frac{(f\ast\varphi)(x+h)}{(\omega(h))}<\infty.
$$
We now employ the parametrix method. As shown in \cite[p.~199]{Komatsu1989}, there is an ultradifferential operator $P(D)$ of class $\ast$ that admits a $\mathcal{D}_{\bar{B}(0,R)}^{M_pL_p,1}$-parametrix, namely, for which there are $\chi\in\mathcal{D}^{\ast}_{\bar{B}(0,R)}$ and $\varphi\in\mathcal{D}_{\bar{B}(0,R)}^{M_pL_p,1}$ such that $\delta=P(D)\varphi+ \chi$. Setting $f_0=f\ast\chi$ and $g=f\ast \varphi$, we obtain the decomposition 
$f=P(D)g+f_0$, which in particular establishes the representation \eqref{eq:Bstruct} with functions $f_\alpha$ satisfying the bounds \eqref{eq:Bstructboundsibis}.
\end{proof}

\section{Quasiasymptotic boundedness}

\label{sec:quasiasympbound}

Our results from the previous section can be applied to obtain structural theorems for ultradistributions being quasiasymptotically bounded in dimension 1. Let $\rho$ be a positive function defined on an interval of the form $[\lambda_{0},\infty)$. We are interested in the relation $
f(\lambda x)= O(\rho(\lambda))$ as $\lambda\to\infty$
in ultradistribution spaces. The analog of the condition \eqref{eq:omegacondgeneral} for a function $\rho$ in this multiplicative setting is being $O$-regularly varying (at infinity) \cite[p.~65]{bingham1989regular}. The latter means (cf. \cite[Theorem 2.0.4, p.~64]{bingham1989regular}) that $\rho$ is measurable and for each $R>1$
\[
\limsup_{x\to\infty} \sup_{\lambda\in [R^{-1}, R]}\frac{\rho(\lambda x)}{\rho(\lambda)}<\infty.
\]

 The next proposition can be established with the aid of Theorem \ref{structural th S-asymptotic} and Theorem \ref{structural th S-asymptotic, (M.3)} via an exponential change of variables as in the authors' work \cite[Lemma~3.4]{structquasiultra}; we omit details. 
	\begin{proposition}
		\label{p:quasiasympboundimposesstruct}
		Let $f\in\mathcal{D}^{\prime \beurou}(\mathbb{R})$ and $\rho$ be a positive function. Suppose that $
f(\lambda x)= O(\rho(\lambda))$ as $\lambda\to\infty $ in $\mathcal{D}'^{\ast}(\mathbb{R}\setminus\{0\})$.
\begin{enumerate}[(i)] 
\item If $(M.1)$, $(M.2)'$, and $(M.3)'$ hold and $\rho$ is $O$-regularly varying at infinity,   
then there are continuous functions $f_{m}$ and $x_0>0$ such that
			\begin{equation}\label{f eqstruct}
				f = \sum_{m=0}^{\infty} f_{m}^{(m)} \qquad \mbox{on } \mathbb{R}\setminus[-x_0,x_0] 
			\end{equation}
		and for some $\ell > 0$ (for any $\ell > 0$) there is $C_{\ell} > 0$ such that
			\begin{equation}\label{f eqstruct 1}
				\left| f_{m}(x) \right| \leq C_{\ell} \frac{\ell^{m}}{M_{m}} |x|^{m} \rho(|x|) , \qquad |x|>x_0, \ m\in\mathbb{N}. 
			\end{equation} 

\item  If $(M.1)$, $(M.2)$, and $(M.3)$ hold, for each $R>1$ one can find $x_0$ and continuous functions such that $f$ has the representation \eqref{f eqstruct}, where the $f_m$ satisfy the bounds
\begin{equation}\label{f eqstruct 2}
				\left| f_{m}(a x) \right| \leq C_{\ell} \frac{\ell^{m}}{M_{m}} |x|^{m} \rho(|x|) , \qquad |x|>x_0, \ R^{-1}<a<R,\ m\in\mathbb{N},
			\end{equation}
 for some $\ell > 0$ (for any $\ell > 0$).
\end{enumerate}
	\end{proposition}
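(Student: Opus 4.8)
The plan is to reduce the multiplicative asymptotic relation $f(\lambda x) = O(\rho(\lambda))$ on $\mathbb{R}\setminus\{0\}$ to an additive S-asymptotic boundedness relation on $\mathbb{R}$, to which Theorems \ref{structural th S-asymptotic} and \ref{structural th S-asymptotic, (M.3)} apply directly. The natural device is the exponential change of variables: write $\mathbb{R}\setminus\{0\}$ as two copies of $\mathbb{R}$ via $x = \pm e^{t}$, and transport $f$ to an ultradistribution $g$ on $\mathbb{R}$ (one for each half-line). Under this substitution, dilations $x \mapsto \lambda x$ become translations $t \mapsto t + \log\lambda$, and the dilation-invariant measure $dx/|x|$ on the half-line becomes Lebesgue measure $dt$. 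The hypothesis $f(\lambda x) = O(\rho(\lambda))$ in $\mathcal{D}'^{\ast}(\mathbb{R}\setminus\{0\})$ then translates (after absorbing Jacobian factors, which are themselves ultradifferentiable of class $\ast$ and bounded on compact sets) into $g(t+h) = O(\omega(h))$, $h \in W$, in $\mathcal{D}'^{\ast}(\mathbb{R})$, where $W = [\log\lambda_{0}, \infty)$ is an unbounded set and $\omega(h) = \rho(e^{h})$.

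Next I would check that the regularity hypotheses transfer correctly. In part (i), $\rho$ being $O$-regularly varying at infinity is, by the characterization quoted just before the proposition (i.e., $\limsup_{x\to\infty}\sup_{\lambda\in[R^{-1},R]}\rho(\lambda x)/\rho(\lambda) < \infty$ for each $R>1$), precisely the statement that $\omega(h) = \rho(e^{h})$ satisfies condition \eqref{eq:omegacondgeneral} of Proposition \ref{p:extendweightfunc} on the half-line $W = [\log\lambda_{0},\infty)$, which is closed and convex. Hence $\omega$ extends to a positive measurable function on $\mathbb{R}$ satisfying \eqref{eq:omegacond}, and Theorem \ref{structural th S-asymptotic} applies under $(M.1)$, $(M.2)'$, $(M.3)'$: for each $R>0$ we get continuous $\{g_{\alpha}\}$ on $W_{R}$ with $g = \sum_{\alpha} g_{\alpha}^{(\alpha)}$ there and $|g_{\alpha}(t)| \leq C_{\ell}(\ell^{\alpha}/M_{\alpha})\omega(t)$. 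In part (ii), no regularity on $\rho$ is assumed, but the stronger sequence conditions $(M.1)$, $(M.2)$, $(M.3)$ are in force, so Theorem \ref{structural th S-asymptotic, (M.3)} applies directly to $g$ on $\mathbb{R}$ with the weight $\omega(h) = \rho(e^{h})$ defined only on $W$, yielding the representation with the two-variable bounds \eqref{eq:Bstructboundsibis}.

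Finally I would transport the representation back. Changing variables $t = \log|x|$ in $g = \sum_{\alpha} g_{\alpha}^{(\alpha)}$ and using that differentiation in $t$ becomes $x\,d/dx$ up to lower-order terms, one regroups finitely many terms at each order — the Faà di Bruno / Leibniz bookkeeping is routine given $(M.2)'$ — to obtain $f = \sum_{m} f_{m}^{(m)}$ on $\mathbb{R}\setminus[-x_{0},x_{0}]$ with $x_{0} = \lambda_{0}e^{R}$ (say) and the powers of $|x|$ in \eqref{f eqstruct 1}, \eqref{f eqstruct 2} arising from the $|x|^{m}$ produced by each $x\,d/dx$ acting through the chain rule; the weight $\omega(\log|x|) = \rho(|x|)$ supplies the $\rho(|x|)$ factor, and in case (ii) the translation variable $h$ in \eqref{eq:Bstructboundsibis} turns into the dilation parameter $a$ in \eqref{f eqstruct 2} since $e^{h} \in [R^{-1},R]$ corresponds to $|h|<\log R$. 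One should do this separately on the two half-lines $\mathbb{R}_{+}$ and $\mathbb{R}_{-}$ and then combine. The main obstacle is purely technical rather than conceptual: keeping careful track of how the vector field $x\,d/dx$ and its powers interact with the weight sequence bounds under the exponential substitution, so that the constants and the growth factors $\ell^{m}/M_{m}$ survive; this is exactly the computation carried out in \cite[Lemma~3.4]{structquasiultra}, which is why we are content to omit the details and merely cite it.
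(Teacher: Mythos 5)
Your proposal follows exactly the route the paper indicates: an exponential change of variables turning dilations into translations, so that the $O$-regular variation of $\rho$ becomes condition \eqref{eq:omegacondgeneral} for $\omega(h)=\rho(e^{h})$ on a half-line (extended via Proposition \ref{p:extendweightfunc} in case (i)), after which Theorems \ref{structural th S-asymptotic} and \ref{structural th S-asymptotic, (M.3)} give the representation that is transported back, with the bookkeeping for $x\,d/dx$ deferred to \cite[Lemma~3.4]{structquasiultra}. This matches the paper's own (one-sentence, detail-omitting) proof in both substance and in the external reference used, so there is nothing to object to.
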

\begin{remark}
\label{rk struct quasiasymtotic 1}
Clearly, \eqref{f eqstruct 1} implies \eqref{f eqstruct 2} for an $O$-regularly varying function $\rho$. Assume $(M.1)$, $(M.2)'$, and $(M.3)'$ hold. Notice the representations \eqref{f eqstruct} with bounds \eqref{f eqstruct 2} are also sufficient to yield $
f(\lambda x)= O(\rho(\lambda))$ as $\lambda\to\infty $ in $\mathcal{D}'^{\ast}(\mathbb{R}\setminus\{0\})$, so that the converses of both parts (i) and (ii) of Proposition \ref{p:quasiasympboundimposesstruct} are valid.
\end{remark}

In the rest of the section we are interested in describing quasiasymptotic boundedness in the full space $\mathcal{D}'^{\ast}(\mathbb{R})$. For it, we need to impose  stronger variation  assumptions on the gauge function $\rho$. We call a positive measurable function $O$-slowly varying at infintiy if for each $\varepsilon>0$ there are $C_{\varepsilon},c_{\varepsilon},R_{\varepsilon}>0$ such that
\begin{equation}
\label{eq O-slowly varying}
\frac{c_{\varepsilon}}{\lambda^{\varepsilon}} \leq \frac{L(\lambda x)}{L(x)}\leq C_{\varepsilon}\lambda^{\varepsilon}, \qquad \lambda\geq 1, \ x>R_{\varepsilon}.
\end{equation}
In the terminology from \cite{bingham1989regular} this means that the upper and lower Matuszewska indices of $L$ are both equal to 0. Thus, a function of the form $\rho(\lambda)=\lambda^{q}L(\lambda)$ is an $O$-regularly varying function with both upper and lower Matuszewska indices equal to $q\in\mathbb{R}$.

The authors have found  in \cite{structquasiultra} complete structural theorems for the quasiasymptotic behavior of ultradistributions with respect to regularly varying functions. If we exchange \cite[Lemma 3.4]{structquasiultra} with Proposition \ref{p:quasiasympboundimposesstruct}, the same technique\footnote{One still needs an $O$-version of the integration lemma \cite[Lemma 3.2]{structquasiultra}; however, careful inspection in the arguments given in \cite[Subsection 2.10.2 and Proposition 2.17]{stevan2011asymptotic} shows that having the inequalities \eqref{eq O-slowly varying} is all one needs to establish the validity of such an $O$-version.} from \cite[Sections 3 and 4]{structquasiultra} leads to two ensuing structural theorems for quasiasymptotic boundedness.
	
	\begin{theorem}
		\label{t:quasiasympboundinfinity} Assume $(M.1)$, $(M.2)'$, and $(M.3)'$ hold.
		Let $f \in \uddspacereal{\beurou}(\mathbb{R})$, $q\in\mathbb{R}$, and let $L\in L^{\infty}_{loc}[0,\infty)$ be $O$-slowly varying at infinity. Let $k$ be the smallest positive integer such that $-k \leq q$. Then, 
			\begin{equation} 
				\label{eq:quasiasympboundinfinity}
				f(\lambda x) = \bigoh{\lambda^{q} L(\lambda)} \qquad \text{ as } \lambda \rightarrow \infty \text{ in } \uddspacereal{\beurou}(\mathbb{R}) 
			\end{equation}
holds 
		if and only if there are continuous functions $f_{m}$ on $\reals$ such that
			\begin{equation*}
				f = \sum_{m = k-1}^{\infty} f_{m}^{(m)} , 
			\end{equation*}
		for some $\ell > 0$ (for any $\ell > 0$) there exists $C_{\ell} > 0$ such that
			\begin{equation}
			\label{eq bdd struct quasi infinity}
				\left| f_{m}(x) \right| \leq C_{\ell} \frac{\ell^{m}}{M_{m}} (1 + |x|)^{q + m} L(|x|) , \qquad m \geq k - 1 ,
			\end{equation}
		and additionally (only) when $q=-k$
			\begin{equation}\label{eq alpha=-k struct quasi infinity}
				\int_{-x}^{x} f_{k - 1}(x) dx = \bigoh{L(x)} , \qquad x \rightarrow \infty . 
			\end{equation}
	\end{theorem}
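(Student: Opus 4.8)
The plan is to leverage the already–established theory: Theorem \ref{structural th S-asymptotic} (for S-asymptotic boundedness on cones/unbounded sets) together with Proposition \ref{p:quasiasympboundimposesstruct} and the technique from \cite{structquasiultra}, transplanting the regularly-varying arguments there to the $O$-slowly varying setting. So I do not expect to reprove everything from scratch; rather I would explain how the pieces fit together.

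\textbf{Step 1 (sufficiency).} Suppose $f=\sum_{m\geq k-1} f_m^{(m)}$ with the bounds \eqref{eq bdd struct quasi infinity}, plus \eqref{eq alpha=-k struct quasi infinity} when $q=-k$. For $\varphi\in\mathcal D^{\ast}(\mathbb R)$ I would estimate $\langle f(\lambda x),\varphi(x)\rangle = \sum_m (-1)^m\lambda^{-m-1}\int f_m(u)\varphi^{(m)}(u/\lambda)\,du$. Splitting each integral over $|u|\leq \lambda$ and $|u|>\lambda$ and using \eqref{eq bdd struct quasi infinity} together with the $O$-slow variation \eqref{eq O-slowly varying} of $L$ (which controls $L(|u|)$ against $L(\lambda)$ up to an arbitrarily small power $|u/\lambda|^{\pm\varepsilon}$), one gets each term is $O(\lambda^q L(\lambda))$ with a geometrically summable constant — provided $\ell$ is chosen small enough against $H$ (Roumieu: use \eqref{eq:R*allLp}). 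The only delicate term is $m=k-1$ when $q=-k$, i.e. the borderline $q+m=-1$: here the naive bound gives a logarithm, and one removes it exactly by integration by parts in $u$, using the primitive $\int_{-u}^{u} f_{k-1}=O(L(u))$ from \eqref{eq alpha=-k struct quasi infinity}. This is the standard ``endpoint'' manipulation and I would carry it out as in \cite[Sec.~4]{structquasiultra}.

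\textbf{Step 2 (necessity).} Assume \eqref{eq:quasiasympboundinfinity}. First, restricting to $\mathbb R\setminus\{0\}$ and applying Proposition \ref{p:quasiasympboundimposesstruct}(i) with $\rho(\lambda)=\lambda^qL(\lambda)$ (which is $O$-regularly varying since $L$ is $O$-slowly varying), I obtain continuous $g_m$ and $x_0>0$ with $f=\sum_m g_m^{(m)}$ on $|x|>x_0$ and $|g_m(x)|\leq C_\ell \ell^m M_m^{-1}|x|^{q+m}L(|x|)$ there. The remaining work — and the main obstacle — is to \emph{glue} this exterior representation to a global one valid on all of $\mathbb R$, absorbing the behaviour near the origin, and to control the lowest index so the sum starts at $m=k-1$, producing \eqref{eq alpha=-k struct quasi infinity} as the residual obstruction in the critical case $q=-k$. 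This is precisely the content of the ``structural theorem at infinity'' machinery of \cite[Sections~3 and 4]{structquasiultra}: one writes the difference between $f$ and a suitable finite modification of $\sum g_m^{(m)}$ as a compactly supported ultradistribution, uses a partition of unity and the order/structure of compactly supported ultradistributions, and then an integration lemma (the $O$-version mentioned in the footnote) to pass from the primitive estimates to the pointwise bounds \eqref{eq bdd struct quasi infinity} for all $x$. The choice of the starting index $k$ (smallest positive integer with $-k\leq q$) is forced by integrating $q+m$ times: one can lower the index as long as $q+m\geq 0$, and at $q+m=-1$ integration introduces exactly the primitive appearing in \eqref{eq alpha=-k struct quasi infinity}.

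\textbf{Step 3 (bookkeeping for the two cases $*$).} Throughout I would keep the Beurling/Roumieu dichotomy running in parallel, as the paper does: in the Roumieu case replace ``for some $\ell$'' assertions by ``for every $(\ell_p)\in\mathfrak R$'' via \eqref{eq:R*allLp}–\eqref{eq:R*someLp}, and use that under $(M.2)'$ one may refine $(\ell_p)$ so that $M_pL_p$ again satisfies $(M.2)'$ (the remark after \eqref{eq:R*someLp}). I expect the estimates in Step 1 and the integration lemma in Step 2 to go through verbatim with $M_p$ replaced by $M_pL_p$. In short: sufficiency is a direct (if endpoint-sensitive) computation; necessity is an application of Proposition \ref{p:quasiasympboundimposesstruct} followed by the gluing-and-integrating procedure of \cite{structquasiultra}, and the genuinely new input is only checking that $O$-slow variation \eqref{eq O-slowly varying} suffices in place of the Karamata-type estimates used there — which is exactly what the footnote asserts and what I would verify explicitly for the integration lemma.
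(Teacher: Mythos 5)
Your proposal matches the paper's own treatment: the paper proves this theorem exactly by substituting Proposition \ref{p:quasiasympboundimposesstruct} for the corresponding lemma in \cite{structquasiultra} and rerunning the technique of Sections 3 and 4 there, with the only genuinely new ingredient being the $O$-version of the integration lemma that the footnote asserts follows from \eqref{eq O-slowly varying}. Your three steps (direct endpoint-sensitive estimate for sufficiency, exterior structure plus gluing/integration for necessity, and the Beurling--Roumieu bookkeeping via \eqref{eq:R*allLp}--\eqref{eq:R*someLp}) are precisely the outline the authors intend.
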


A function $L$ is $O$-regularly varying at the origin if $L(1/x)$ is  $O$-regularly varying at infinity.
	\begin{theorem} Assume $(M.1)$, $(M.2)'$, and $(M.3)'$.
Let $f \in \uddspacereal{\beurou}(\mathbb{R})$, $q\in\mathbb{R}$, and let $L$ be $O$-slowly varying at the origin. Let $k$ be the smallest positive integer such that $-k \leq q$. Then, we have that
			\begin{equation}
							\label{eq:quasiasympbound0}
				f(\varepsilon x) = \bigoh{\varepsilon^{q} L(\varepsilon)}  \qquad \text{ as } \varepsilon \rightarrow 0^{+} \text{ in } \uddspacereal{\beurou}(\mathbb{R}) 
			\end{equation}
	holds	if and only if there exist $x_0>0$ and continuous functions $F$ and $f_{m}$ on $[-x_0, x_0] \setminus \{0\}$, $m \geq k$, such that
			\begin{equation*}
				f(x) = F^{(k)} + \sum_{m = k}^{\infty} f_{m}^{(m)} , \qquad \text{ on } (-x_0, x_0) ,
			\end{equation*}
		for some $\ell > 0$ (for any $\ell > 0$) there exists $C_{\ell} > 0$ such that
			\begin{equation*}
				\left| f_{m}(x) \right| \leq C_{\ell} \frac{\ell^{m}}{M_{m}} |x|^{q+ m} L(|x|) , \qquad 0 < |x| \leq x_0 ,
			\end{equation*}
		for all $m \geq k$, and $F=0$ when $q>-k$ while if $q=-k$ the function $F$ satisfies, for each $a>0$, the bounds	
			\begin{equation*}
				F(ax) - F(-x) = O_{a}(L(x)),  \qquad x\to0^{+}. 
			\end{equation*}

	\end{theorem}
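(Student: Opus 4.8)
To prove this I would follow the pattern of Theorem~\ref{t:quasiasympboundinfinity}, running the argument of \cite[Sections~3 and~4]{structquasiultra} with \cite[Lemma~3.4]{structquasiultra} replaced by Proposition~\ref{p:quasiasympboundimposesstruct} and the integration lemma replaced by its $O$-variant (cf.\ the footnote above). The sufficiency is direct: given the representation on $(-x_{0},x_{0})$ and $\varphi\in\mathcal{D}^{\ast}(\mathbb{R})$, for all small $\varepsilon>0$ the dilate $\varphi(\varepsilon^{-1}\,\cdot\,)$ is supported in $(-x_{0},x_{0})$, so
\[
\ev{f(\varepsilon x)}{\varphi(x)}=(-1)^{k}\varepsilon^{-k}\int_{\reals}F(\varepsilon x)\varphi^{(k)}(x)\,dx+\sum_{m\geq k}(-1)^{m}\varepsilon^{-m}\int_{\reals}f_{m}(\varepsilon x)\varphi^{(m)}(x)\,dx .
\]
Since $m\geq k\geq-q$ gives $q+m\geq0$, the substitution $u=\varepsilon x$, the bounds on $f_{m}$, and the control of $L(\varepsilon|x|)$ on $\supp\varphi$ coming from \eqref{eq O-slowly varying} make the $m$-th summand $\bigoh{(\ell^{m}/M_{m})\varepsilon^{q}L(\varepsilon)}$, with constants growing at most geometrically in $m$; as $(M.3)'$ forces $\sum_{m}t^{m}/M_{m}<\infty$ for all $t$, the series converges and is $\bigoh{\varepsilon^{q}L(\varepsilon)}$ (the Beurling/Roumieu distinction is harmless, one of the two scaling parameters being free). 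The $F$-term is absent when $q>-k$; when $q=-k$ one splits the integral into $\{x>0\}$ and $\{x<0\}$, uses $\int_{\reals}\varphi^{(k)}=0$, and, arguing as in \cite[Section~4]{structquasiultra}, reduces it to the difference estimate $F(ax)-F(-x)=O_{a}(L(x))$ to obtain $\int_{\reals}F(\varepsilon x)\varphi^{(k)}(x)\,dx=\bigoh{L(\varepsilon)}$. This gives \eqref{eq:quasiasympbound0}.

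For the necessity, I would first restrict \eqref{eq:quasiasympbound0} to test functions supported in $(0,\infty)$ and perform the exponential change of variables $x=e^{-t}$, exactly as in the proof of Proposition~\ref{p:quasiasympboundimposesstruct}; this turns the relation into an S-asymptotic boundedness relation $g(t+h)=\bigoh{\omega(h)}$, $h\in[0,\infty)$, in $\mathcal{D}'^{\ast}(\mathbb{R})$, with $\omega(h)=e^{-qh}L(e^{-h})$. Since $L$ is $O$-slowly varying at the origin, $\omega$ satisfies \eqref{eq:omegacondgeneral} on the convex set $[0,\infty)$, hence by Proposition~\ref{p:extendweightfunc} extends to a measurable weight on $\reals$ satisfying \eqref{eq:omegacond}, and Theorem~\ref{structural th S-asymptotic} applies to $g$. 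Transporting the resulting structure back (the logarithmic-to-power substitution multiplies $\omega(-\log x)=x^{q}L(x)$ by bounded powers of $x$, absorbed by reindexing) gives, for some $x_{1}>0$, continuous functions $g_{m,+}$ on $(0,x_{1})$ with $f=\sum_{m\geq0}g_{m,+}^{(m)}$ there and $\lvert g_{m,+}(x)\rvert\leq C_{\ell}(\ell^{m}/M_{m})x^{q+m}L(x)$; the same argument with $x=-e^{-t}$ produces $f=\sum_{m\geq0}g_{m,-}^{(m)}$ on $(-x_{1},0)$.

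The last step is to glue the two one-sided expansions into a single representation on a punctured neighborhood of $0$ and to extract the leading term. By shifting indices and using the freedom to move a lower derivative of a continuous function from one $g_{m,\pm}$ to the next, I would reduce to expansions starting at $m=k-1$. The order-$(k-1)$ term has a bound $\lvert x\rvert^{q+k-1}L(\lvert x\rvert)$ that is locally integrable precisely when $q>-k$; in that case it is absorbed into the order-$k$ term, giving $f=\sum_{m\geq k}f_{m}^{(m)}$ with $F=0$. When $q=-k$ that term is not locally integrable, so I would replace $g_{k-1,\pm}^{(k-1)}$ by $F^{(k)}$ with $F$ a continuous primitive of the glued function; integrating the $O$-version of the integration lemma then converts the pointwise bound for $g_{k-1,\pm}$ into the difference estimate $F(ax)-F(-x)=O_{a}(L(x))$, while the tail $f_{m}$, $m\geq k$, retains its pointwise bounds. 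The hard part is precisely this critical case $q=-k$: one must exhibit $F$ continuous on $[-x_{0},x_{0}]\setminus\{0\}$ with $F^{(k)}$ a genuine ultradistribution on $(-x_{0},x_{0})$ carrying \emph{exactly} the difference estimate and no leftover singularity at $0$, be sure that the gluing across $0$ introduces no spurious term, and track that no decay is lost passing back and forth through the exponential substitution between the $O$-regular and $O$-slow variation estimates; all this is carried out, with the obvious changes, in \cite[Section~4]{structquasiultra}.
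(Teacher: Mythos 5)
Your proposal is correct and follows essentially the same route as the paper, which itself establishes this theorem by invoking the technique of \cite[Sections~3 and~4]{structquasiultra} with \cite[Lemma~3.4]{structquasiultra} replaced by Proposition~\ref{p:quasiasympboundimposesstruct} and the integration lemma replaced by its $O$-version. Your added details (direct verification of sufficiency, the exponential change of variables feeding into Theorem~\ref{structural th S-asymptotic} via Proposition~\ref{p:extendweightfunc}, and the gluing/critical-case analysis for $q=-k$) are exactly the steps that reference carries out.
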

	
We end this section with a remark that briefly indicates further properties of quasiasymptotic boundedness.

	\begin{remark} Let $\mathcal{Z}^{\beurou}(\mathbb{R})$ be the space of ultradifferentiable functions introduced in   \cite[Section 5]{structquasiultra}. Similarly as in the quoted article, one can show under the assumptions $(M.1)$, $(M.2)'$ and $(M.3)'$:
	\begin{enumerate}[(i)]
	\item If \eqref{eq:quasiasympboundinfinity} holds with an $O$-regularly varying function at infinity $L$ , then $f\in\mathcal{Z}'^{\beurou}(\mathbb{R})$ and the quasiasymptotic boundedness relation \eqref{eq:quasiasympboundinfinity} actually holds true in  $\mathcal{Z}'^{\beurou}(\mathbb{R})$. 
	
	From here one derives the following characterization of $\mathcal{Z}'^{\beurou}(\mathbb{R})$. An ultradistribution $f\in\mathcal{D}'^{\beurou}(\mathbb{R})$ belongs to $\mathcal{Z}'^{\beurou}(\mathbb{R})$ if and only if there is some $q\in\mathbb{R}$ such that $f(\lambda x)=O(\lambda^{q})$ as $\lambda \to \infty$ in $\mathcal{D}'^{\beurou}(\mathbb{R})$. We leave the verification of the direct implication to the reader.
	\item If $f\in\mathcal{Z}'^{\beurou}(\mathbb{R})$ and \eqref{eq:quasiasympbound0} holds with  an $O$-regularly varying function at the origin $L$, then \eqref{eq:quasiasympbound0} is actually valid in  $\mathcal{Z}'^{\beurou}(\mathbb{R})$.
	\item From these two properties, the Banach-Steinhaus theorem, and the fact that $\mathcal{D}^{\ast}(\mathbb{R})$ is dense in $\mathcal{Z}^{\ast}(\mathbb{R})$,  one concludes that \cite[Theorem 5.1 and Theorem 5.3]{structquasiultra} still hold true if one replaces $(M.2)$ and $(M.3)$ there by the weaker assumptions $(M.2)'$ and $(M.3)'$.
	\end{enumerate}
	\end{remark}

\section{The moment asymptotic expansion}
\label{Section MAE}
This section is devoted to study the moment asymptotic expansion (\ref{eq:MAEasymp}), which in general we interpret in the sense of the following definition.

	\begin{definition} Let $\mathcal{X}$ be a l.c.s. of smooth functions provided with continuous actions of the dilation operators and the Dirac delta and all its partial derivatives.
		An element $f \in \mathcal{X}^{\prime}$ is said to satisfy the \textit{moment asymptotic expansion (MAE)} in $\mathcal{X}^{\prime}$ if there are $\mu_{\alpha} \in \complexes$, $\alpha \in \naturals^{d}$, called its \textit{moments}, such that for any $\varphi \in \mathcal{X}$ and $k \in \naturals$ we have
			\begin{equation}
				\label{eq:MAE}
				\ev{f(\lambda x)}{\varphi(x)} = \sum_{|\alpha| < k} \frac{\mu_{\alpha} \varphi^{(\alpha)}(0)}{\alpha! \lambda^{|\alpha| + d}} + \bigoh{\frac{1}{\lambda^{k + d}}} , \qquad \lambda \rightarrow \infty .
			\end{equation}
	\end{definition}

Similarly as in the case of compactly supported distributions \cite{E-K-AsympExpansions,estrada2012distributional} or analytic functionals \cite{Schmidt2005}, one can show that any compactly supported distribution satisfies the MAE in $\mathcal{E}'^{\ast}(\mathbb{R}^{d})$ (we will actually state a stronger result in Proposition \ref{t:E'*hasUMAE} below).
Naturally, as in the distribution case, we expect the MAE to be also valid in larger ultradistribution spaces. In dimension 1, Estrada gave in \cite[Theorem 7.1]{Estrada 1998} (cf. \cite{estrada2012distributional}) a full characterization of the largest distribution space where the moment asymptotic expansion holds; in fact, he showed that $f\in\mathcal{D}'(\mathbb{R})$ satisfies the MAE (in $\mathcal{D}'(\mathbb{R})$) if and only if  $f\in\mathcal{K}'(\mathbb{R})$ (and the MAE holds in this space), where $\mathcal{K}'(\mathbb{R})$ is the dual of the so-called space of  GLS symbols of pseudodifferential operators \cite{GLS1968}. One of our  goals here is to give an ultradistributional counterpart of Estrada's result.

We start by introducing an ultradistributional version of $\mathcal{K}(\mathbb{R}^{d})$.
For each $q \in \naturals$ and $\ell > 0$ we denote by $\mathcal{K}^{M_{p}, \ell}_{q}(\reals^{d})$ the Banach space of all smooth functions $\varphi$ for which the norm
	\begin{equation*}
		\norm{\varphi}_{\mathcal{K}^{M_{p}, \ell}_{q}(\mathbb{R}^{d})} = \sup_{\alpha \in \naturals^{d}} \sup_{x \in \reals^{d}} \frac{(1 + |x|)^{|\alpha| - q} |\varphi^{(\alpha)}(x)|}{\ell^{|\alpha|} M_{\alpha}}
	\end{equation*}
is finite. From this we construct the spaces
	\[ \mathcal{K}^{(M_{p})}_{q}(\reals^{d}) = \varprojlim_{\ell \rightarrow 0^{+}} \mathcal{K}^{M_{p}, \ell}_{q}(\reals^{d}) , \qquad \mathcal{K}^{\{M_{p}\}}_{q}(\reals^{d}) = \varinjlim_{\ell \rightarrow \infty} \mathcal{K}^{M_{p}, \ell}_{q}(\reals^{d}) , \]
and finally the test function space
	\[ \mathcal{K}^{\beurou}(\reals^{d}) = \varinjlim_{q \in \naturals} \mathcal{K}^{\beurou}_{q}(\reals^{d}) . \]
It should be noticed that this is space is never trivial; in fact, $\mathcal{K}^{\beurou}(\reals^{d})$ contains the space of polynomials.	
	
	Our first important result in this subsection asserts that the elements of $\mathcal{K}^{\prime \beurou}(\reals^{d})$ automatically satisfy the MAE. Interestingly, no restriction on the weight sequence $M_p$ is needed to achieve this.
	
	\begin{theorem}
		\label{t:K'*haveMAE}
		Any element $f \in \mathcal{K}^{\prime \beurou}(\reals^{d})$ satisfies the MAE in $\mathcal{K}^{\prime \beurou}(\reals^{d})$ and its moments are exactly $\mu_{\alpha} = \ev{f(x)}{x^{\alpha}}$, $\alpha \in \naturals^{d}$. 
	\end{theorem}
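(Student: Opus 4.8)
The plan is to establish the MAE directly from the definition by exploiting the scaling behavior built into the norms $\norm{\cdot}_{\mathcal{K}^{M_p,\ell}_q}$. First I would observe that, for $f\in\mathcal{K}'^{\beurou}(\reals^d)$, the quantities $\mu_\alpha=\ev{f}{x^\alpha}$ are well-defined complex numbers, since each monomial $x^\alpha$ lies in $\mathcal{K}^{\beurou}(\reals^d)$ (indeed in $\mathcal{K}^{\beurou}_{|\alpha|}(\reals^d)$). So the candidate moments make sense, and it remains to prove the asymptotic formula \eqref{eq:MAE}.

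The core of the argument is a Taylor expansion with remainder control. Fix $\varphi\in\mathcal{K}^{\beurou}(\reals^d)$, so $\varphi\in\mathcal{K}^{\beurou}_{q}(\reals^d)$ for some $q\in\naturals$, and fix $k\in\naturals$. Write $\varphi(x/\lambda)=\sum_{|\alpha|<k}\frac{\varphi^{(\alpha)}(0)}{\alpha!}\frac{x^\alpha}{\lambda^{|\alpha|}}+R_{k,\lambda}(x)$, where $R_{k,\lambda}$ is the $k$-th order Taylor remainder. Using $\ev{f(\lambda x)}{\varphi(x)}=\lambda^{-d}\ev{f(x)}{\varphi(x/\lambda)}$ and the linearity of $f$, the principal terms contribute exactly $\sum_{|\alpha|<k}\frac{\mu_\alpha\varphi^{(\alpha)}(0)}{\alpha!\,\lambda^{|\alpha|+d}}$. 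The remaining task is to show $\ev{f(x)}{R_{k,\lambda}(x)}=O(\lambda^{-k})$ as $\lambda\to\infty$, which by continuity of $f$ reduces to proving that, for a suitable continuous seminorm $p$ on $\mathcal{K}^{\beurou}(\reals^d)$, one has $p(R_{k,\lambda})=O(\lambda^{-k})$ uniformly; more precisely, that $R_{k,\lambda}\in\mathcal{K}^{\beurou}_{q'}(\reals^d)$ for some fixed $q'$ (I expect $q'=\max(q,k)$ or so) with $\norm{R_{k,\lambda}}_{\mathcal{K}^{M_p,\ell}_{q'}}=O(\lambda^{-k})$ for the relevant $\ell$.

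The key estimate is therefore on the derivatives of the Taylor remainder. By the integral form of the remainder, $\partial^\beta R_{k,\lambda}(x)$ is a combination of terms of the shape $\lambda^{-|\gamma|}(\partial^{\beta+\gamma-?}\varphi)(\theta x/\lambda)\,x^{?}$ with $|\gamma|$ near $k$; the point is that each derivative hitting $\varphi$ at the scaled point $\theta x/\lambda$ produces the decay $\lambda^{-|\gamma|}$, but one must offset the polynomial growth in $x$ and the growth of $M_{\beta+\gamma}$. Here is where the homogeneity of the $\mathcal{K}$-norms does the work: the weight $(1+|t|)^{|\alpha|-q}$ applied at $t=\theta x/\lambda$ converts a factor $(1+|x|)$ into (essentially) $\lambda(1+|\theta x/\lambda|)$ up to constants (using $\lambda\ge1$ and $0\le\theta\le1$), and counting powers carefully shows the surplus of $x$-powers coming from the remainder is matched by exactly $k$ extra factors of $\lambda^{-1}$. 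For the Roumieu case one additionally uses $(M.2)'$-type control, or simply the fact that $\varphi\in\mathcal{K}^{M_p,\ell_0}_q$ for \emph{some} $\ell_0$ and one may enlarge $\ell$; in the Beurling case one works with all $\ell>0$. I expect the bookkeeping of the multinomial coefficients together with $M_{\beta+\gamma}\le$ (something)$\cdot M_\beta$ via $(M.2)'$ to be the only place a weight-sequence condition would naturally enter, but — as the statement emphasizes — a crude bound absorbing $M_{\beta+\gamma}$ into the fixed index $q'$ and the constant (taking $q'$ large and noting $|\gamma|\le k$ is bounded) avoids needing $(M.2)'$ altogether.

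The main obstacle is precisely this remainder estimate: making sure that the decay rate is exactly $\lambda^{-k}$ (not merely $o(1)$) and that it holds in a \emph{single} fixed Banach step $\mathcal{K}^{M_p,\ell}_{q'}(\reals^d)$ of the inductive/projective limit, so that the continuity of $f$ can be invoked with a uniform constant. Once the estimate $\norm{R_{k,\lambda}}_{\mathcal{K}^{M_p,\ell}_{q'}}\le C_{\varphi,k}\,\lambda^{-k}$ is in hand, applying $f$ and combining with the exact principal terms yields \eqref{eq:MAE}, and uniqueness of the moments follows by testing \eqref{eq:MAE} against $\varphi$ with prescribed Taylor jet at $0$ (e.g.\ polynomials), forcing $\mu_\alpha=\ev{f}{x^\alpha}$.
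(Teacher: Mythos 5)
Your proposal is correct and follows essentially the same route as the paper: subtract the $(k-1)$th Taylor polynomial (which lies in $\mathcal{K}^{\beurou}(\reals^{d})$ and produces the principal terms exactly), then bound the remainder pairing by the $\mathcal{K}^{M_{p},\ell}_{q}$-norm of the dilated function with $q\geq k$, splitting into the regimes $|\alpha|\geq q$, $|x|\geq\lambda$, and $|x|\leq\lambda$ with $|\alpha|<k$ handled via the Taylor formula. Your observation that the finitely many ratios $M_{\beta}/M_{\alpha}$ with $|\beta|=k$ can be absorbed into a constant, so that no condition on $M_{p}$ is needed, matches the paper's remark and its constant $C_{\ell,k}$.
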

	
	\begin{proof}Let $f \in \mathcal{K}^{\prime \beurou}(\reals^{d})$ We keep $\lambda\geq 1$ and fix $k \in \naturals$. Take any arbitrary $\varphi \in \mathcal{K}^{\beurou}_{q}(\reals^{d})$, where we may assume $q\geq k$.  Consider the $(k-1)$th order Taylor polynomial of $\varphi$ at the origin, that is, $\varphi_{k}(x) := \sum_{|\alpha| < k} \varphi^{(\alpha)}(0) x^{\alpha}/\alpha!.$ Since $\varphi_{k} \in \mathcal{K}^{\beurou}(\reals^{d})$,
	\[
				\ev{f(\lambda x)}{\varphi(x)} = \sum_{|\alpha| < k} \frac{\mu_{\alpha} \varphi^{(\alpha)}(0)}{\alpha! \lambda^{|\alpha| + d}} + \ev{f(\lambda x)}{\varphi(x) - \varphi_{k}(x)} .
	\]
Thus, we need to show $\ev{f(\lambda x)}{\varphi(x) - \varphi_{k}(x)}=O(1/\lambda^{k + d})$. This bound does not require any uniformity in $k$; therefore, we may just assume that $\varphi^{(\alpha)}(0) = 0$ for any $|\alpha| < k$ so that our problem reduces to estimate $|\ev{f(\lambda x)}{\varphi(x)}|$. There exists some $\ell = \ell_{f} > 0$ (some $\ell = \ell_{\phi} > 0$) such that $\varphi\in \mathcal{K}_{q}^{M_p,\ell}(\mathbb{R}^{d})$ and 
			\[ \left| \ev{f(\lambda x)}{\varphi(x)} \right| \leq \frac{\norm{f}_{(\mathcal{K}^{M_{p}, \ell}_{q}(\reals^{d}))^{\prime}}}{\lambda^{d}} \sup_{\alpha \in \naturals^{d}} \sup_{x \in \reals^{d}} \frac{(1 + |x|)^{|\alpha| - q} |\varphi^{(\alpha)}(x / \lambda)|}{\lambda^{|\alpha|} \ell^{|\alpha|} M_{\alpha}} . \]
			
			If $|\alpha|\geq q $, we have
			
	\begin{align*} 
		\sup_{x\in \mathbb{R}^{d}} \frac{(1 + |x|)^{|\alpha| - q} |\varphi^{(\alpha)}(x / \lambda)|}{\lambda^{|\alpha|} \ell^{|\alpha|} M_{\alpha}}
				 &= \frac{1}{\lambda^{q}}\sup_{} \left(\frac{1 + |x|}{\lambda + |x|}\right)^{|\alpha| - q} \frac{(1 + |x|/\lambda)^{|\alpha| - q} |\varphi^{(\alpha)}(x / \lambda)|}{ \ell^{|\alpha|} M_{\alpha}} \\
				&\leq  \frac{\norm{\varphi}_{\mathcal{K}^{M_{p}, \ell}_{q}(\mathbb{R}^{d})} }{\lambda^{k}}.
			\end{align*}
We further consider $|\alpha|<q$. When $|x| \geq \lambda$, obviously 
			\[ \frac{1}{2} \leq \frac{1 + |x|}{\lambda + |x| } \]
	and we obtain
	\[
		\sup_{|x| \geq \lambda} \frac{(1 + |x|)^{|\alpha| - q} |\varphi^{(\alpha)}(x / \lambda)|}{\lambda^{|\alpha|} \ell ^{|\alpha|} M_{\alpha}}
				\leq 2^{q} \frac{\norm{\varphi}_{\mathcal{K}^{M_{p}, \ell}_{q}(\mathbb{R}^{d})} }{\lambda^{k}}.
	\]
		We are left with the case $|x| \leq \lambda$ and $|\alpha|< q$. If $k\leq |\alpha|<q$ we get
			\begin{align*} 
				\sup_{|x| \leq \lambda} \frac{(1 + |x|)^{|\alpha| - q} |\varphi^{(\alpha)}(x / \lambda)|}{\lambda^{|\alpha|} \ell^{|\alpha|} M_{\alpha}} &\leq \frac{1}{\lambda^{k}}\sup_{|x| \leq \lambda} \frac{ |\varphi^{(\alpha)}(x / \lambda)|}{ \ell^{|\alpha|} M_{\alpha}} 
\leq  2^{q-k}\frac{\norm{\varphi}_{\mathcal{K}^{M_{p}, \ell}(
\mathbb{R}^{d})}}{\lambda^{k}} . 
			\end{align*}
	Finally, for $|\alpha| < k$, the Taylor formula yields
			\begin{align*} 
				\sup_{|x| \leq \lambda} \frac{(1 + |x|)^{|\alpha| - q} |\varphi^{(\alpha)}(x / \lambda)|}{\lambda^{|\alpha|} \ell^{|\alpha|} M_{\alpha}}
				&
				 \leq \sup_{|x| \leq \lambda} \frac{(1 + |x|)^{|\alpha| - q}}{\lambda^{|\alpha|} \ell^{|\alpha|} M_{\alpha}} \underset{|\beta|=k}{\sum_{\alpha \leq \beta}} \frac{|\varphi^{(\beta)}(\xi_{x / \lambda})|}{(\beta - \alpha)!} \frac{|x|^{|\beta - \alpha|}}{\lambda^{|\beta - \alpha|}} 
				 \\
				 &
				\leq 2^{q} \frac{C_{\ell, k}}{\lambda^{k}} \norm{\varphi}_{\mathcal{K}^{M_{p}, \ell}_{p}(\mathbb{R}^{d})}.
			\end{align*}
The proof is now complete.
	\end{proof}
	
The next proposition describes the structure of the elements of $\mathcal{K}'^{\ast}(\mathbb{R}^{d})$. The proof in the Beurling case is standard, while in the Roumieu case it can be established via the dual Mittag-Leffler lemma in a similar fashion as in \cite[Section~8]{ultradistributions1}, we therefore leave details to the reader. We point out that the converse of  Proposition \ref{p:structureK*} holds unconditionally, that is, without having to impose any assumption on $M_p$.

	\begin{proposition}
		\label{p:structureK*}
		Let $M_{p}$ satisfy $(M.1)$ and $(M.2)'$. Let $f\in\mathcal{K}'^{\ast}(\mathbb{R}^{d})$. Then, given any $q \in \naturals$ one can find a multi-sequence of continuous functions $f_{\alpha}=f_{q, \alpha}\in C(\mathbb{R}^{d})$ such that 
					\begin{equation}
						\label{eq:MAEstruct}
						f = \sum_{\alpha \in \naturals^{d}} f_{ \alpha}^{(\alpha)}
					\end{equation}
				and for some $\ell > 0$ (for any $\ell > 0$) there is $C = C_{q,\ell} > 0$ such that
					\begin{equation}
						\label{eq:MAEstructbound}
						\left| f_{\alpha}(x) \right| \leq C\frac{\ell^{|\alpha|}}{M_{\alpha}}(1 + |x|)^{|\alpha| - q} , \qquad x \in \reals^{d} ,\ \alpha\in\mathbb{N}^{d}.
					\end{equation}			
	\end{proposition}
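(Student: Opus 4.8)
The plan is to obtain the structure by a standard duality/closed-range argument, exactly mirroring how one proves the structure theorems for $\mathcal{D}'^{\ast}$ and $\mathcal{S}'^{\ast}$, but with the weight $(1+|x|)^{|\alpha|-q}$ encoding the $\mathcal{K}$-scale. Fix $q\in\naturals$ and recall that by construction $f$ restricts to a continuous functional on $\mathcal{K}^{(M_p)}_{q}(\reals^{d})$ (resp. $\mathcal{K}^{M_p,\ell}_{q}(\reals^{d})$ for some $\ell>0$ in the Roumieu case). First I would realize this space isometrically inside a weighted $L^1$-space. Concretely, introduce on $\naturals^{d}\times\reals^{d}$ the weight $w_{q,\ell}(\alpha,x)=\ell^{|\alpha|}M_{\alpha}(1+|x|)^{q-|\alpha|}$ and the map
\[
 j:\mathcal{K}^{M_p,\ell}_{q}(\reals^{d})\to L^{\infty}\bigl(\naturals^{d}\times\reals^{d};w_{q,\ell}\bigr),\qquad j(\varphi)(\alpha,x)=\varphi^{(\alpha)}(x),
\]
which is an isometric embedding by the very definition of the norm $\norm{\cdot}_{\mathcal{K}^{M_p,\ell}_{q}(\reals^{d})}$. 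Then $f\circ j^{-1}$ is a continuous functional on the subspace $j(\mathcal{K}^{M_p,\ell}_{q})$ of $L^{\infty}(w_{q,\ell})$; by Hahn--Banach it extends to all of $L^{\infty}(w_{q,\ell})$, hence (identifying $L^{\infty}(w)'\supseteq L^{1}(1/w)$, or more carefully passing to a representing measure and using that the weight is continuous and strictly positive) it is represented by a bounded family of measures, which after the usual regularization — convolving with a fixed test function, or invoking Lemma \ref{lemma regularizing L^infinity} together with $(M.2)'$ to trade an $L^{\infty}$-type bound for continuous functions — yields continuous functions $g_{\alpha}$ with $|g_{\alpha}(x)|\le C\,\ell^{-|\alpha|}M_{\alpha}^{-1}(1+|x|)^{|\alpha|-q}$ and $\langle f,\varphi\rangle=\sum_{\alpha}\int g_{\alpha}\varphi^{(\alpha)}$. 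Setting $f_{\alpha}=(-1)^{|\alpha|}g_{\alpha}$ gives \eqref{eq:MAEstruct} with the bound \eqref{eq:MAEstructbound}.

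The step that needs care — and which the authors themselves flag by saying the Roumieu case ``can be established via the dual Mittag--Leffler lemma'' — is passing from the Banach-space statement to the correct inductive-limit statement in the Roumieu case $\{M_p\}$. Here $\mathcal{K}^{\{M_p\}}_{q}(\reals^{d})=\varinjlim_{\ell\to\infty}\mathcal{K}^{M_p,\ell}_{q}(\reals^{d})$, and a functional continuous on this inductive limit is merely continuous on each step, so the naive Hahn--Banach argument would produce, for each $\ell$, functions $g_{\alpha}^{(\ell)}$ satisfying a bound with that particular $\ell$, but with no coherence between different $\ell$'s. To get a single representation \eqref{eq:MAEstruct} valid with the quantifier ``for any $\ell>0$'', one organizes the spaces $\mathcal{K}^{M_p,\ell}_{q}$ (or rather their $L^{1}$-type preduals $Y_{\ell}$ with weight $1/w_{q,\ell}$) into a projective spectrum and applies the dual Mittag--Leffler lemma, exactly as Komatsu does in \cite[Section~8]{ultradistributions1} for the structure of $\mathcal{E}'^{\{M_p\}}$ and $\mathcal{D}'^{\{M_p\}}$; the hypothesis $(M.2)'$ is precisely what guarantees the linking maps between consecutive weights are well-behaved (it lets one absorb the constant $AH^{|\alpha|}$ from $M_{|\alpha|+1}\le AH^{|\alpha|}M_{|\alpha|}$ into a change of $\ell$), so that the spectrum satisfies the Mittag--Leffler condition needed for the lemma to apply.

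The remaining routine points I would check: that the series \eqref{eq:MAEstruct} converges in $\mathcal{K}'^{\ast}(\reals^{d})$ — this is immediate since for $\varphi\in\mathcal{K}^{\ast}_{q'}(\reals^{d})$ with $q'$ arbitrary, the pairing $\sum_{\alpha}\int |f_{\alpha}||\varphi^{(\alpha)}|$ is dominated by $C\sum_{\alpha}(\ell\ell')^{|\alpha|}M_{\alpha}^{-1}M_{\alpha}\int(1+|x|)^{-q+q'}\,dx/(1+|x|)^{?}$, which one arranges to converge by choosing $q$ large enough relative to $q'$ and $\ell$ small (Beurling) or by the geometric decay coming from $\ell\ell'<1$ after rescaling (Roumieu) — and that the choice of $q$ is harmless because $\mathcal{K}^{\ast}(\reals^{d})=\varinjlim_q\mathcal{K}^{\ast}_q(\reals^{d})$, so $f$ being in the dual means it is continuous on every $\mathcal{K}^{\ast}_q$, and we simply fix one such $q$ as in the statement. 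The main obstacle, then, is not any single estimate but getting the quantifier structure right in the Roumieu case via the dual Mittag--Leffler machinery; everything else is a transcription of the classical Komatsu-style structure theorem to the $\mathcal{K}$-weight.
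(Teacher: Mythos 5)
Your overall architecture coincides with what the paper intends (it leaves the proof to the reader, describing exactly this route: a standard Komatsu-style structure argument for the Beurling case and the dual Mittag--Leffler lemma for the Roumieu case). However, the representation step as you have written it has a genuine gap. You embed $\mathcal{K}^{M_p,\ell}_q(\reals^d)$ isometrically into a weighted $L^{\infty}$ space and then extend $f\circ j^{-1}$ by Hahn--Banach to all of $L^{\infty}(w_{q,\ell})$. The dual of $L^{\infty}$ is the space of bounded \emph{finitely additive} set functions, not $L^{1}(1/w)$ and not even the space of Radon measures; the containment $L^{\infty}(w)'\supseteq L^{1}(1/w)$ that you invoke points the wrong way, since nothing forces the Hahn--Banach extension to land in that subspace. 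Continuity and positivity of the weight do not repair this. The standard fix --- and what Komatsu actually does in the argument you are transcribing --- is to embed into a $C_0$-type space (continuous functions on $\naturals^d\times\reals^d$ vanishing at infinity with respect to the weight), whose dual \emph{is} an $\ell^1$-sum of Radon measures by the Riesz representation theorem. Checking that $j$ lands in such a $C_0$ space requires comparing norms with slightly different parameters ($q$ versus $q+1$, or $\ell$ versus a smaller $\ell'$ in the Roumieu step), which is where part of the index bookkeeping lives. One then regularizes the resulting measures to continuous functions (e.g.\ via a decomposition $\delta=\Delta\chi_1+\chi_0$ as in Lemma \ref{lemma regularizing L^infinity}); note that it is this regularization, which shifts the derivative multi-index by a fixed amount, that is the essential consumer of $(M.2)'$, rather than the linking maps of the Mittag--Leffler spectrum as you suggest. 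Alternatively, one can mimic the paper's proof of Theorem \ref{t:Bstruct} and first pass to an equivalent $L^1$-type predual norm (summing over $\alpha$ and integrating in $x$ against an extra $(1+|x|)^{-d-1}$, at the cost of adjusting $q$), after which the dual really is a weighted $L^{\infty}$ and Hahn--Banach applies cleanly.

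Two smaller points. First, your convergence check at the end is garbled (the exponent ``$?$'' and the quantifier on $q$): for $f_{\alpha}$ satisfying \eqref{eq:MAEstructbound} with a fixed $q$, the pairing $\sum_{\alpha}\int|f_{\alpha}||\varphi^{(\alpha)}|$ with $\varphi\in\mathcal{K}^{\ast}_{q'}$ converges absolutely only when $q>q'+d$, so the representation attached to a given $q$ converges in $\mathcal{K}'^{\ast}_{q'}$ only for such $q'$ (which is consistent with the statement, since a representation is produced for \emph{each} $q$, but you should say so). Second, your appeal to the dual Mittag--Leffler lemma in the Roumieu case is the right idea and matches the paper's reference to \cite[Section~8]{ultradistributions1}, but as stated it is only a pointer: the actual verification that the relevant projective spectrum of preduals is dense-ranged (so that the lemma applies) is precisely the content being deferred, and it should not be passed off as automatic.
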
 

Notice that when $(M.1)$ and $(M.3)'$ hold, then one has the continuous and dense inclusions
$\udspacereals{\beurou}{d} \hookrightarrow \mathcal{K}^{\beurou}(\reals^{d}) \hookrightarrow \uespacereals{\beurou}{d} ,$ so that in particular $\mathcal{K}'^{\beurou}(\reals^{d})\subset \mathcal{D}'^{\ast} (\mathbb{R}^{d})$.
Upon combining Proposition \ref{p:quasiasympboundimposesstruct}(i) with Theorem \ref{t:K'*haveMAE}, one obtains the following complete characterization of those one-dimensional ultradistributions $f\in\mathcal{D}'^{\ast}(\mathbb{R})$ satisfying the MAE:

	\begin{theorem}
		\label{t:MAEstruct}
		Suppose $M_{p}$ satisfies $(M.1)$, $(M.2)'$, and $(M.3)'$. An ultradistribution $f \in \uddspacereal{\beurou}(\mathbb{R})$ satisfies the MAE in $\uddspacereal{\beurou}(\mathbb{R})$ if and only if $f \in \mathcal{K}^{\prime \beurou}(\mathbb{R})$.
	\end{theorem}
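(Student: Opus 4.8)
The plan is to prove the two implications separately, using the machinery already assembled in the excerpt.

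\medskip

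\textbf{Sufficiency.} Assume $f\in\mathcal{K}^{\prime\ast}(\mathbb{R})$. Then Theorem~\ref{t:K'*haveMAE} gives the MAE directly \emph{in} $\mathcal{K}^{\prime\ast}(\mathbb{R})$, i.e. the asymptotic expansion \eqref{eq:MAE} holds for every test function $\varphi\in\mathcal{K}^{\ast}(\mathbb{R})$. Since $(M.1)$ and $(M.3)'$ hold, we have the continuous dense inclusion $\mathcal{D}^{\ast}(\mathbb{R})\hookrightarrow\mathcal{K}^{\ast}(\mathbb{R})$ noted just before the theorem, so in particular $f\in\mathcal{D}'^{\ast}(\mathbb{R})$ and \eqref{eq:MAE} holds in particular for all $\varphi\in\mathcal{D}^{\ast}(\mathbb{R})$; this is precisely the MAE in $\mathcal{D}'^{\ast}(\mathbb{R})$. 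So this direction is essentially free.

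\medskip

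\textbf{Necessity.} This is the substantive direction. Suppose $f\in\mathcal{D}'^{\ast}(\mathbb{R})$ satisfies the MAE in $\mathcal{D}'^{\ast}(\mathbb{R})$, with moments $\mu_n=\mu_n(f)$. Taking $k=0$ in \eqref{eq:MAE} (with $d=1$) gives $\langle f(\lambda x),\varphi(x)\rangle = O(1/\lambda)$ as $\lambda\to\infty$ for every $\varphi\in\mathcal{D}^{\ast}(\mathbb{R})$; in particular, testing against functions supported away from the origin, $f(\lambda x)=O(\lambda^{-1})$ as $\lambda\to\infty$ in $\mathcal{D}'^{\ast}(\mathbb{R}\setminus\{0\})$, with the constant gauge $\rho\equiv 1$, which is trivially $O$-regularly varying. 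Hence Proposition~\ref{p:quasiasympboundimposesstruct}(i) applies: there exist $x_0>0$ and continuous functions $f_m$ on $\{|x|>x_0\}$ such that $f=\sum_{m=0}^\infty f_m^{(m)}$ on $\mathbb{R}\setminus[-x_0,x_0]$ and, for some $\ell>0$ (resp. for any $\ell>0$), $|f_m(x)|\le C_\ell\,\ell^m |x|^m/M_m$ for $|x|>x_0$. These are exactly the bounds \eqref{eq:MAEstructbound} with $q=0$ \emph{away from the origin} (since $|x|\asymp 1+|x|$ there). The remaining task is to glue this ``tail'' structure to an interior piece and conclude $f\in\mathcal{K}^{\prime\ast}(\mathbb{R})$.

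\medskip

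\textbf{Gluing.} Choose a cutoff $\chi\in\mathcal{D}^{\ast}(\mathbb{R})$ with $\chi\equiv 1$ near $[-x_0,x_0]$, and write $f=\chi f+(1-\chi)f$. The piece $\chi f\in\mathcal{E}'^{\ast}(\mathbb{R})$ is compactly supported, hence belongs to $\mathcal{K}'^{\ast}(\mathbb{R})$ and satisfies the MAE there (the compactly supported case, mentioned in the text and spelled out in Proposition~\ref{t:E'*hasUMAE}). The piece $g:=(1-\chi)f$ agrees with $f$ outside a neighborhood of $[-x_0,x_0]$, so by the structural representation it can be written $g=\sum_{m} (\,(1-\chi)f_m)^{(m)}+(\text{finitely many correction terms from differentiating }1-\chi)$; the corrections are compactly supported continuous-derivative terms, harmless, while the $(1-\chi)f_m$ extend by $0$ to all of $\mathbb{R}$ as continuous functions obeying $|(1-\chi(x))f_m(x)|\le C_\ell\,\ell^m(1+|x|)^m/M_m$ globally (the bound is vacuous where $1-\chi=0$). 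A representation of the form \eqref{eq:MAEstruct}--\eqref{eq:MAEstructbound} with $q$ chosen large enough to absorb the compactly supported interior contribution (using that on a compact set $(1+|x|)^{|\alpha|-q}$ is bounded below) shows that $f$ lies in $\mathcal{K}'^{\ast}(\mathbb{R})$: indeed \eqref{eq:MAEstruct}--\eqref{eq:MAEstructbound} is precisely the criterion for membership, since each $f_\alpha^{(\alpha)}$ acts continuously on $\mathcal{K}^{M_p,\ell'}_q(\mathbb{R})$ for suitable $\ell'$, and the series converges there.

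\medskip

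\textbf{Main obstacle.} The delicate point is the passage from the ``punctured'' structure supplied by Proposition~\ref{p:quasiasympboundimposesstruct}(i) — valid only on $\mathbb{R}\setminus[-x_0,x_0]$ — to a \emph{global} representation valid on all of $\mathbb{R}$ with the polynomial-type weights $(1+|x|)^{|\alpha|-q}$, while keeping the Roumieu-case quantifier ``for any $\ell>0$'' intact through the cutoff and through the reindexing caused by the Leibniz rule on $1-\chi$. One must verify that differentiating $(1-\chi)$ only produces a controlled (finite, $\ell$-independent in the relevant sense) number of compactly supported terms and that convergence of the glued series holds in some $\mathcal{K}^{M_p,\ell'}_q$; here $(M.2)'$ is what makes the Leibniz reindexing cost only a geometric factor $H^{|\alpha|}$, which can be absorbed into a renamed $\ell$. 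Once the structural description \eqref{eq:MAEstruct}--\eqref{eq:MAEstructbound} is established for $f$, membership $f\in\mathcal{K}'^{\ast}(\mathbb{R})$ follows as in the (unconditional) converse of Proposition~\ref{p:structureK*}.
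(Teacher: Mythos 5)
Your sufficiency direction is fine and is exactly what the paper does (quote Theorem \ref{t:K'*haveMAE} and restrict to $\mathcal{D}^{\ast}(\mathbb{R})$). The necessity direction, however, has a genuine gap: you only extract the $k=0$ instance of the MAE, obtaining $f(\lambda x)=O(\lambda^{-1})$ in $\mathcal{D}'^{\ast}(\mathbb{R}\setminus\{0\})$, and hence via Proposition \ref{p:quasiasympboundimposesstruct}(i) a single representation with bounds of order $(1+|x|)^{m}$ (or $(1+|x|)^{m-1}$; your gauge is stated inconsistently as both $\rho\equiv 1$ and $\rho(\lambda)=\lambda^{-1}$). That is not enough. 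Since $\mathcal{K}^{\ast}(\mathbb{R})=\varinjlim_{q}\mathcal{K}^{\ast}_{q}(\mathbb{R})$ is an \emph{inductive} limit over $q$, membership in the dual requires $f$ to act continuously on $\mathcal{K}^{\ast}_{q}(\mathbb{R})$ for \emph{every} $q$, i.e.\ one needs, for each $q$, a representation $f=\sum_m f_{q,m}^{(m)}$ with decay $|f_{q,m}(x)|\lesssim \ell^{m}(1+|x|)^{m-q-2}/M_m$ (the extra $-2$ is what makes the pairing $\sum_m\int f_{q,m}\varphi^{(m)}$ converge against $\varphi\in\mathcal{K}_q$). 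Your remark that one should choose ``$q$ large enough to absorb the interior contribution'' points in the wrong direction: larger $q$ means a larger test space $\mathcal{K}_q$ and hence a \emph{stronger} decay requirement on the $f_{q,m}$, which a single fixed-gauge application of Proposition \ref{p:quasiasympboundimposesstruct}(i) cannot deliver. Consequently your appeal to the converse of Proposition \ref{p:structureK*} fails, because its hypothesis (a representation for every $q$) has only been verified for one $q$.

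The fix — and this is what the paper does — is to use the full strength of the MAE: for every $k$, testing \eqref{eq:MAE} against $\varphi$ supported away from the origin kills all the $\delta^{(\alpha)}$ terms and yields $f(\lambda x)=O(\lambda^{-k-1})$ in $\mathcal{D}'^{\ast}(\mathbb{R}\setminus\{0\})$. Applying Proposition \ref{p:quasiasympboundimposesstruct}(i) with the gauge $\rho(\lambda)=\lambda^{-q-2}$ for each fixed $q$, and patching in the compact piece near the origin via Komatsu's first structure theorem for compactly supported ultradistributions (your cutoff argument would serve the same purpose), one gets for each $q$ continuous $f_{q,m}$ with $|f_{q,m}(x)|=O_{q,\ell}(\ell^{m}(1+|x|)^{m-q-2}/M_m)$ and $f=\sum_m f_{q,m}^{(m)}$ on all of $\mathbb{R}$; this gives $f\in\mathcal{K}'^{\ast}_{q}(\mathbb{R})$ for every $q$, hence $f\in\mathcal{K}'^{\ast}(\mathbb{R})$. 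Your gluing discussion and the attention to the Roumieu quantifiers are otherwise reasonable, but they address a secondary issue; the missing idea is the iteration over all orders $q$ of the expansion.
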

	\begin{proof} If $f$ satisfies the MAE, then in particular $f(\lambda x)=O(\lambda^{-q})$ in $\mathcal{D}'^{\ast}(\mathbb{R}\setminus\{0\})$ for each $q\in\mathbb{N}$. Hence, for a fixed but arbitrary $q\in\mathbb{N}$, using Proposition \ref{p:quasiasympboundimposesstruct}(i) and Komatsu's first structural theorem in the case of compactly supported ultradistributions, we can write $f=\sum_{m=1}^{\infty}f^{(m)}_{m}$ in $\mathcal{D}'^{\ast}(\mathbb{R})$ with $f_{m}=f_{q,m}\in C(\mathbb{R})$ such that for some (for each) $\ell>0$ they fulfill bounds $f_{m}(x)=O_{q,\ell}( \ell^{m}(|x|+1)^{m-q-2} /M_m)$. Clearly, this representation yields $f\in\mathcal{K}'^{\ast}_{q}(\mathbb{R})$. Since $q$ was arbitrary, we conclude that $f\in\mathcal{K}'^{\ast}(\mathbb{R})$. For the converse, Theorem \ref{t:K'*haveMAE} shows that a stronger conclusion actually holds.
	\end{proof}
\begin{remark} In dimension $d=1$, this argument gives an alternative way for proving Proposition  \ref{p:structureK*} in the non-quasianalytic case without having to resort in the dual Mittag-Leffler lemma.
\end{remark}

\section{The uniform MAE}\label{section UMAE}

The bound in (\ref{eq:MAE}) is not uniform in general, but in the ultradistributional case it is natural to expect that some sort of uniformity could be present. For instance, we see  below in Proposition \ref{t:E'*hasUMAE} that this is the case for compactly supported ultradistributions. Let us introduce the following uniform variant of the MAE. Throughout this section we work with three weight sequences $M_p$, $N_p$, and $A_p$, and simultaneously denote in short their Beurling or Roumieu cases by $\ast$, $\dagger$, and $\sharp$, respectively. The associated function \cite{ultradistributions1} of $N_{p}$ is given by 
	\[ N(t) := \sup_{p \in \naturals} \log \frac{t^{p} N_{0}}{N_{p}} , \qquad t \geq 0 . \]

\begin{definition} Let $A_{p}$ be a weight sequence and let $\mathcal{X}$ be a l.c.s. of smooth functions provided with continuous actions of the dilation operators and the Dirac delta and all its partial derivatives. An element $f \in \mathcal{X}^{\prime}$ satisfies the \textit{uniform moment asymptotic expansion (UMAE) in $\mathcal{X}^{\prime}$ with respect to $\sharp$} if there are $\mu_{\alpha} \in \complexes$, $\alpha \in \naturals^{d}$, such that for any $\varphi \in \mathcal{X}$ and each $\ell > 0$ (for some $\ell=\ell_{\varphi} > 0$)  the asymptotic formula
			\begin{equation}
				\label{eq:UMAE}
				\ev{f(\lambda x)}{\varphi(x)} = \sum_{|\alpha| < k} \frac{\mu_{\alpha} \varphi^{(\alpha)}(0)}{\alpha! \lambda^{|\alpha| + d}} + \bigoh{\frac{\ell^{k} A_{k}}{\lambda^{k + d}}} , \qquad \lambda \rightarrow \infty, 
			\end{equation}
		holds uniformly for $k \in \naturals$.
	\end{definition}

We now introduce ultradistribution spaces that are closely related to the UMAE. Given $q, \ell > 0$ we denote by $\mathcal{K}^{M_{p}, \ell}_{N_p, q}(\reals^{d})$ the Banach space of all $\varphi \in C^{\infty}(\reals^{d})$ for which
	\begin{equation}
		\norm{\varphi}_{\mathcal{K}^{M_{p}, \ell}_{N_p, q}(\mathbb{R}^{d})} := \sup_{\alpha \in \naturals^{d}} \sup_{x \in \reals^{d}} \frac{e^{-N(q |x|)} (1 + |x|)^{|\alpha|} \left| \varphi^{(\alpha)}(x) \right|}{\ell ^{|\alpha|} M_{\alpha}}
	\end{equation}
is finite. We then define
	\[ \mathcal{K}^{(M_{p})}_{(N_{p})}(\reals^{d}) = \varinjlim_{q \rightarrow \infty} \varprojlim_{\ell  \rightarrow 0^{+}} \mathcal{K}^{M_{p}, \ell}_{N_{p}, q}(\reals^{d}) , \qquad \mathcal{K}^{\{M_{p}\}}_{\{N_{p}\}}(\reals^{d}) = \varinjlim_{\ell \rightarrow \infty}\varprojlim_{q \rightarrow 0^{+}}  \mathcal{K}^{M_{p}, \ell}_{N_{p}, q}(\reals^{d}) ,\]
	and consider the dual $\mathcal{K}'^{\ast}_{\dagger}(\mathbb{R}^{d})$, whose elements satisfy the UMAE as stated in the next theorem.

	\begin{theorem}
		\label{t:Ku'*haveUMAE} Suppose $M_p$ and $N_p$  satisfy  $(M.1)$ and $(M.2)$; in addition we assume that $\inf_{p\in \mathbb{N}} \sqrt[p]{N_{p}}>0$. Set $A_p=N_p \max_{j \leq p} (M_{j}/j!)$. Then, any element $f \in \mathcal{K}^{\prime \ast}_{\dagger}(\reals^{d})$ satisfies the UMAE in $\mathcal{K}^{\prime \beurou}_{\dagger}(\reals^{d})$ w.r.t. $\sharp$.		
	\end{theorem}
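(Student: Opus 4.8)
The plan is to mimic the proof of Theorem \ref{t:K'*haveMAE}, keeping careful track of uniformity in $k$ and extracting the right dependence on the sequence $A_p$. First I would fix $f\in\mathcal{K}'^{\ast}_{\dagger}(\mathbb{R}^{d})$ and, as before, take $\varphi\in\mathcal{K}^{\ast}_{\dagger}(\mathbb{R}^{d})$, so that for suitable $q,\ell$ (in the Beurling case for each $\ell$ and some $q$; in the Roumieu case for each $q$ and some $\ell$) one has $\varphi\in\mathcal{K}^{M_p,\ell}_{N_p,q}(\mathbb{R}^{d})$ and
\[
\left|\ev{f(\lambda x)}{\varphi(x)}\right|\leq \frac{\norm{f}_{(\mathcal{K}^{M_p,\ell}_{N_p,q})'}}{\lambda^{d}}\sup_{\alpha}\sup_{x}\frac{e^{-N(q|x/\lambda|)}(1+|x|)^{|\alpha|}|\varphi^{(\alpha)}(x/\lambda)|}{\lambda^{|\alpha|}\ell^{|\alpha|}M_{\alpha}}.
\]
Next, I would isolate the Taylor remainder: using the $(k-1)$th order Taylor polynomial $\varphi_k$ at the origin, $\ev{f(\lambda x)}{\varphi(x)}=\sum_{|\alpha|<k}\mu_\alpha\varphi^{(\alpha)}(0)/(\alpha!\lambda^{|\alpha|+d})+\ev{f(\lambda x)}{\varphi-\varphi_k}$, where now the point is that $\varphi-\varphi_k$ must be controlled \emph{simultaneously} for all $k$. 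Here the Taylor integral remainder contributes a factor $\max_{|\beta|=k}\sup|\varphi^{(\beta)}|/k!$, and since $|\varphi^{(\beta)}(x)|\leq \ell^{k}M_{k}e^{N(q|x|)}(1+|x|)^{-k}\norm{\varphi}$, one sees the combination $\ell^{k}M_{k}/k!$ entering; the weight $N_p$ (through $e^{N(q|x|)}$) will produce the $N_k$ factor after rescaling, which is exactly why $A_p=N_p\max_{j\leq p}(M_j/j!)$ is the natural gauge.

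Then I would carry out the case analysis on $|\alpha|$ versus $q$ versus $k$, exactly parallel to the proof of Theorem \ref{t:K'*haveMAE} but retaining the $e^{-N(q|x/\lambda|)}$ factor and tracking powers of $\lambda$. The key estimates are: (a) when $|\alpha|\geq q$, a factor $\lambda^{-q}$ comes out and the term $(1+|x|)/(\lambda+|x|)\leq 1$ gives decay $\lambda^{-(|\alpha|-q)}$, which for $|\alpha|\geq k$ yields at least $\lambda^{-k}$; (b) for $|x|\geq\lambda$, the bound $1/2\leq(1+|x|)/(\lambda+|x|)$ again produces $\lambda^{-k}$ up to a constant $2^{q}$; (c) for $|x|\leq\lambda$ and $|\alpha|<k$, the Taylor remainder formula supplies the factor $|x|^{k-|\alpha|}/\lambda^{k-|\alpha|}$ together with $\max_{|\beta|=k}|\varphi^{(\beta)}(\xi_{x/\lambda})|/(k-|\alpha|)!$; here $e^{-N(q|x/\lambda|)}e^{N(q|\xi_{x/\lambda}|)}\leq 1$ since $|\xi_{x/\lambda}|\leq |x/\lambda|$, and using $(M.2)$ for $M_p$ (in the form $M_k/M_{|\alpha|}M_{k-|\alpha|}\leq AH^{k}$) one bounds $\ell^{k}M_k/(\ell^{|\alpha|}M_{|\alpha|})$ against $(H\ell)^{k}M_k/M_0$, and finally $M_k/(k-|\alpha|)!\leq \max_{j\leq k}(M_j/j!)\cdot k!/((k-|\alpha|)!|\alpha|!)\cdot|\alpha|!\cdot(\text{binomial sum})$, so summing over $\alpha\leq\beta$ with $|\beta|=k$ produces a factor $2^{dk}$ and leaves $\max_{j\leq k}(M_j/j!)$. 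Multiplying through by the $N_k$ coming from the $e^{N(q|x|)}$ factor (after the substitution $x\mapsto\lambda x$, using $(M.2)$ for $N_p$ to absorb the $q$) gives the claimed $\bigoh{\ell'^{k}A_k/\lambda^{k+d}}$ with a new $\ell'$, uniformly in $k$.

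The main obstacle will be handling the interplay of the two weight sequences in the Taylor remainder: one must verify that the rescaled Gaussian-type weight $e^{-N(q|x/\lambda|)}$ is genuinely compatible with the growth $e^{N(q|x|)}$ allowed on derivatives of $\varphi$, and that the resulting estimate is uniform in $\lambda\geq 1$ and in $k$ simultaneously. The hypothesis $\inf_p\sqrt[p]{N_p}>0$ is needed precisely to ensure $N(t)$ does not vanish too fast, so that the sequence $A_p=N_p\max_{j\leq p}(M_j/j!)$ dominates the relevant quotients; and $(M.2)$ for both sequences is what lets one absorb the parameter $q$ (and the constant $H$) into a single final $\ell'$. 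Once these bookkeeping points are settled, the identification of the moments as $\mu_\alpha=\ev{f(x)}{x^{\alpha}}$ follows exactly as in Theorem \ref{t:K'*haveMAE}, since monomials lie in $\mathcal{K}^{\ast}_{\dagger}(\mathbb{R}^{d})$.
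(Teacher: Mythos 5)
Your high-level strategy is the paper's: redo the proof of Theorem \ref{t:K'*haveMAE} with uniform control in $k$, and identify $A_k=N_k\max_{j\le k}(M_j/j!)$ as the product of a Taylor-type factor $M_k/k!$ and an $N_k$ coming from the exponential weight. But the central estimates as you set them up do not close, for a concrete reason. The dual-norm bound for the dilated test function $\varphi(\cdot/\lambda)$ carries the weight $e^{-N(q|x|)}$, not $e^{-N(q|x/\lambda|)}$: writing $\ev{f(\lambda x)}{\varphi(x)}=\lambda^{-d}\ev{f(y)}{\varphi(y/\lambda)}$, the relevant norm is $\sup_{\alpha,y}e^{-N(q|y|)}(1+|y|)^{|\alpha|}|\varphi^{(\alpha)}(y/\lambda)|/(\lambda^{|\alpha|}\ell^{|\alpha|}M_\alpha)$. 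Your version with $e^{-N(q|x/\lambda|)}$ is a weaker (larger) bound, and your subsequent cancellation $e^{-N(q|x/\lambda|)}e^{N(q|\xi_{x/\lambda}|)}\le 1$ throws away exactly the decay that makes the argument work. Indeed, in the region $|x|\le\lambda$, $|\alpha|<k$, after extracting $\lambda^{-|\alpha|}$ from the chain rule and $(|x|/\lambda)^{k-|\alpha|}$ from the Taylor remainder, you are left with a factor $(1+|x|)^{|\alpha|}|x|^{k-|\alpha|}\le(1+|x|)^k$, which is of order $\lambda^k$ on that region and cancels the $\lambda^{-k}$ you just gained; with the exponentials reduced to $1$ you end up with $O(1)$ instead of $O(\ell^kA_k\lambda^{-k})$. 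The paper's proof keeps the gap $N(q|x|/\lambda)-N(q|x|)$, bounds it for $\lambda\ge H$ via \eqref{eq:M2} by $-N(q|x|/H)+\log(AN_0)$, and then trades $(1+|x|)^ke^{-N(q|x|/H)}$ for $(\mathrm{const}/q)^kN_k$ using $e^{N(t)}\ge t^kN_0/N_k$. This is precisely where the $N_k$ in $A_k$, the hypothesis $(M.2)$ for $N_p$, and the normalization $\inf_p\sqrt[p]{N_p}>0$ enter; none of this survives your cancellation.

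A second, related problem is that your case analysis is transplanted from the MAE proof without adapting it to the different norms. In $\mathcal{K}^{M_p,\ell}_{N_p,q}(\mathbb{R}^d)$ the polynomial factor is $(1+|x|)^{|\alpha|}$ (there is no $-q$ in the exponent), and $q$ is a continuous parameter inside the exponential weight, so the split ``(a) $|\alpha|\ge q$'' and the constant ``$2^q$'' in (b) are not meaningful here. The correct division is $|\alpha|<k$ (Taylor remainder, as above) versus $|\alpha|\ge k$, and in the latter case with $|x|\ge\lambda$ the needed step is the inequality $\lambda^k\exp[N(q|x|/\lambda)]\le q_0^{-k}N_k\exp[N(q|x|)]$ with $q_0=\min(1,q)$, obtained directly from the definition of the associated function; this again is the source of an $N_k$ factor and is absent from your sketch. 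Your identification of the moments via monomials in $\mathcal{K}^{\ast}_{\dagger}(\mathbb{R}^d)$ is fine, but the core uniform estimate needs to be rebuilt along the lines above before the proof is complete.
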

	
	\begin{proof} By replacing it by an equivalent sequence, we may assume that $N_p>1$ for each $p$.
		Fix an arbitrary $0 < \varepsilon \leq 1$ in the Beurling case, while we put $\varepsilon = 1$ in the Roumieu case. We will always assume $\lambda \geq H\geq 1$, where $H$ is the parameter in $(M.2)$ (for both sequences). Take any $f \in \mathcal{K}^{\prime \beurou}_{\dagger}(\reals^{d})$ and $\varphi \in \mathcal{K}^{\beurou}_{\dagger}(\reals^{d})$. Arguing as in the proof of Theorem \ref{t:K'*haveMAE}, we need to find a uniform bound for $|\ev{f(\lambda x)}{\varphi(x) - \varphi_{k}(x)}|$, where $\varphi_{k}$ is the $(k-1)$th order Taylor polynomial of $\varphi$ at the origin. There exist $q=q_{\varphi}>0$ and $\ell=\ell_{f} > 0$ ($\ell=\ell_{\varphi} > 0$ and $q=q_{f}>0$) such that $\varphi\in \mathcal{K}^{M_p,\ell}_{N_p,q}(\mathbb{R}^{d})$ and for some $C > 0$
			\[ \left| \ev{f(\lambda x)}{\varphi(x) - \varphi_{k}(x)} \right| \leq \frac{C}{\lambda^{d}} \sup_{\alpha \in \naturals^{d}} \sup_{x \in \reals^{d}} \frac{e^{-N(q |x|)} (1 + |x|)^{|\alpha|} \left| \partial^{\alpha} [\varphi(x / \lambda) - \varphi_{k}(x / \lambda)] \right|}{\ell^{|\alpha|} M_{\alpha}} . \]
We split according to the size of $\alpha \in \naturals^{d}$. 

First suppose that $|\alpha| < k$.  Set $\ell_{0} := \max(1, \ell)$.  From the Taylor expansion and \cite[Proposition 3.6, p. 51]{ultradistributions1} applied to the sequence $N_p$,
			\begin{align*}
				& \frac{(1 + |x|)^{|\alpha|} | \varphi^{(\alpha)}(x/\lambda) - \varphi^{(\alpha)}_{k}(x/\lambda) |}{e^{N(q |x|)}  \lambda^{|\alpha|}  \ell^{|\alpha|} M_{\alpha}} 
				\\
				&
				\leq \frac{e^{-N(q |x|)} (1 + |x|)^{|\alpha|}}{(\lambda \ell)^{|\alpha|} M_{\alpha}} \sum_{\substack{\alpha \leq \beta \\ |\beta| = k}} \frac{|\varphi^{(\beta)}(\xi_{\frac{x}{\lambda}})|}{(\beta - \alpha)!} \left(\frac{|x|}{\lambda}\right)^{|\beta - \alpha|} \\
				&\leq \lambda^{-k} \norm{\varphi}_{\mathcal{K}^{M_{p}, \varepsilon \ell}_{N_{p}, q}(\mathbb{R}^{d})}  \varepsilon^{k} (1+|x|)^{k} e^{N(q |x |/\lambda) - N(q |x|)}  \sum_{\substack{\alpha \leq \beta \\ |\beta| = k}} \frac{\ell^{|\beta - \alpha|} M_{\beta}}{M_{\alpha} (\beta - \alpha)!} \\
				&\leq \lambda^{-k} A N_0\norm{\varphi}_{\mathcal{K}^{M_{p}, \varepsilon \ell}_{N_{p}, q}(\mathbb{R}^{d})} ( d H \ell_{0} \varepsilon)^{k} ( 1+|x|)^{k} e^{-N(q |x| / H) } \sum_{\substack{\alpha \leq \beta \\ |\beta| = k}} \frac{M_{\beta - \alpha}}{|\beta - \alpha|!} \\
				&\leq \lambda^{-k} 2^{d-1}A^2 \norm{\varphi}_{\mathcal{K}^{M_{p}, \varepsilon \ell}_{N_{p}, q}(\mathbb{R}^{d})} \left(4 d q^{-1} H^{2} \ell_0 \varepsilon \right)^{k}  N_{k} \max_{0\leq j\leq k}\frac{M_{j}}{j!}.
			\end{align*}
			
		Now let $|\alpha| \geq k$. For $|x| \geq \lambda$, one has
			\begin{align*} 
				\lambda^{k} \exp[N(q |x| / \lambda)] &= \lambda^{k} \sup_{p \in \naturals} \frac{(q |x| / \lambda)^{p} N_{0}}{N_{p}} \leq \max\left\{ \sup_{p \geq k} \frac{q^{p} |x|^{p} N_{0}}{N_{p}} , \sup_{0 \leq p < k} \frac{q^{p} |x|^{k} N_{0}}{N_{p}} \right\} \\
				&\leq q_{0}^{-k} N_{k} \exp[N(q |x|)] ,
			\end{align*}
		where $q_{0} = \min(1, q)$. Then, since $(1 + |x|)^{|\alpha|} / (1 + |x| / \lambda)^{|\alpha|} \leq \lambda^{|\alpha|}$ for any $\alpha \in \naturals^{d}$, we have
			\[ \sup_{|\alpha| \geq k} \sup_{|x| \geq \lambda} \frac{e^{-N(q |x|)} (1 + |x|)^{|\alpha|} \left| \varphi^{(\alpha)}\left(x / \lambda\right) \right|}{(\lambda \ell)^{|\alpha|} M_{\alpha}} \leq \lambda^{-k} \norm{\varphi}_{\mathcal{K}^{M_{p}, \varepsilon \ell}_{N_{p}, q}(\mathbb{R}^{d})} (\varepsilon / q_{0})^{k} N_{k} . \]
		In the case $|x| \leq \lambda$, we have for $|\alpha| \geq k$,
			\begin{align*}
				\frac{e^{-N(q |x|)} (1 + |x|)^{|\alpha|} \left| \varphi^{(\alpha)}(x / \lambda) \right|}{(\lambda \ell)^{|\alpha|} M_{\alpha}} & \leq \frac{e^{-N(q |x|)} (1 + |x|)^{k} }{\lambda^{k}} \frac{(1 + |x|)^{|\alpha| - k} | \varphi^{(\alpha)}(x / \lambda)|}{\lambda^{|\alpha| - k} \ell^{|\alpha|} M_{\alpha}} \\
				&\leq \lambda^{-k} N^{-1}_{0} e^{N(q)}\norm{\varphi}_{\mathcal{K}^{M_{p}, \varepsilon \ell}_{N_{p}, q}(\mathbb{R}^{d})} (2 \varepsilon / q)^{k} N_{k} ,
			\end{align*} 
			which concludes the proof.

	\end{proof}

The next result describes the UMAE	for compactly supported ultradistributions. The proof goes alone the same lines as that of Theorem \ref{t:Ku'*haveUMAE} and we therefore leave details to the reader.
	
\begin{proposition}
		\label{t:E'*hasUMAE}
		Any element $f \in \udespacereals{\beurou}{d}$ satisfies the UMAE in $\udespacereals{\beurou}{d}$ w.r.t. $\sharp$, where $A_p=\max_{j\leq p}(M_{j}/j!)$. 
	\end{proposition}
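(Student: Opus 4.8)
The plan is to mimic the proof of Theorem \ref{t:Ku'*haveUMAE}, specializing the estimates to a compactly supported ultradistribution, where the situation is considerably simpler because there is no $N_p$-weight to track and the relevant supremum over $x$ is restricted to a bounded set. First I would fix $f \in \udespacereals{\beurou}{d}$, pick a regular compact $K$ containing $\supp f$, and choose $R>0$ with $K \subseteq \overline{B}(0,R)$. By the definition of $\udespacereals{\beurou}{d}$ there exist $\ell>0$ and $C>0$ (for every $\ell>0$ there exists $C=C_\ell>0$) such that $|\ev{f}{\phi}| \leq C \norm{\phi}_{\mathcal{E}^{M_p,\ell}(\overline{B}(0,R))}$ for all $\phi \in \uespacereals{\beurou}{d}$. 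As in the proof of Theorem \ref{t:K'*haveMAE}, with $\varphi_k$ the $(k-1)$th order Taylor polynomial of $\varphi$ at the origin and $\mu_\alpha = \ev{f(x)}{x^\alpha}$, it suffices to bound $|\ev{f(\lambda x)}{\varphi(x)-\varphi_k(x)}|$ uniformly in $k$, where now $\varphi \in \uespacereals{\beurou}{d}$ is arbitrary; note $\ev{f(\lambda x)}{\varphi(x)-\varphi_k(x)} = \lambda^{-d}\ev{f(y)}{(\varphi-\varphi_k)(y/\lambda)}$.

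Next I would estimate $\norm{(\varphi-\varphi_k)(\,\cdot\,/\lambda)}_{\mathcal{E}^{M_p,\ell}(\overline{B}(0,R))}$, i.e. bound $\sup_{|y|\leq R} |\partial^\alpha[(\varphi-\varphi_k)(y/\lambda)]| / (\ell^{|\alpha|} M_\alpha)$ for $\lambda \geq 1$. Splitting on the size of $\alpha$: for $|\alpha|\geq k$ the Taylor polynomial contributes nothing to the derivative, $\partial^\alpha\varphi_k = 0$, and $|\partial^\alpha[\varphi(y/\lambda)]| = \lambda^{-|\alpha|}|\varphi^{(\alpha)}(y/\lambda)| \leq \lambda^{-k} \norm{\varphi}_{\mathcal{E}^{M_p,\ell}(\overline{B}(0,R))} \ell^{|\alpha|} M_\alpha$ since $\lambda \geq 1$ and $|y/\lambda| \leq R$. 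For $|\alpha| < k$, the Taylor remainder in Lagrange form gives $\partial^\alpha(\varphi-\varphi_k)(y/\lambda)$ as a sum over $\alpha \leq \beta$, $|\beta|=k$, of terms $\frac{1}{(\beta-\alpha)!}\varphi^{(\beta)}(\xi_{y/\lambda}) (y/\lambda)^{\beta-\alpha}$ with $|\xi_{y/\lambda}|\leq R$; bounding $|y|\leq R$, using $|\varphi^{(\beta)}(\xi)|\leq \norm{\varphi}_{\mathcal{E}^{M_p,\ell}(\overline{B}(0,R))}\ell^k M_k$ and the multinomial count $\sum_{\alpha\leq\beta,\,|\beta|=k}\binom{\beta}{\alpha}\cdots$, one arrives (using $M_\alpha (\beta-\alpha)! \geq$ appropriate lower bound, or simply $M_k/M_\alpha \leq \max_{j\leq k} M_j$ and $\binom{k}{\alpha}\leq d^k$ type counting) at a bound of the form $\lambda^{-k} \norm{\varphi}_{\mathcal{E}^{M_p,\ell}(\overline{B}(0,R))} (c R \ell_0)^k \max_{j\leq k}(M_j/j!)$ with $\ell_0=\max(1,\ell)$ and an absolute constant $c$ depending on $d$. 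Collecting both cases, $\norm{(\varphi-\varphi_k)(\,\cdot\,/\lambda)}_{\mathcal{E}^{M_p,\ell}(\overline{B}(0,R))} \leq C' \lambda^{-k} (c R \ell_0)^k A_k$ with $A_k = \max_{j\leq k}(M_j/j!)$, which yields $|\ev{f(\lambda x)}{\varphi(x)-\varphi_k(x)}| \leq C'' (c R \ell_0)^k A_k \lambda^{-k-d}$, i.e. the UMAE \eqref{eq:UMAE} with the stated $A_p$ (and the new $\ell$-parameter absorbing $cR\ell_0$).

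The main obstacle — though it is a mild one here compared with Theorem \ref{t:Ku'*haveUMAE} — is keeping the combinatorial/multinomial constants in the Taylor remainder under control so that they are absorbed into a single geometric factor $\ell^k$ without disturbing the sequence $A_k=\max_{j\leq k}(M_j/j!)$; this is exactly the point where the factor $M_j/j!$ (rather than just $M_j$) appears, since the Lagrange remainder carries the $1/(\beta-\alpha)!$ together with $M_{|\beta|}$. One should also be slightly careful in the Roumieu case to fix a single admissible $\ell$ from the outset (corresponding to the "$\ell=\ell_\varphi$'' clause), whereas in the Beurling case the argument runs for every $\ell>0$; this is handled exactly as the quantifier bookkeeping in the proof of Theorem \ref{t:K'*haveMAE}. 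No condition on $M_p$ beyond $(M.1)$ is needed, in parallel with Theorem \ref{t:K'*haveMAE}.
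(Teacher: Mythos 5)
Your overall strategy is the right one and is exactly what the paper intends: the proof of Proposition \ref{t:E'*hasUMAE} is omitted with the remark that it runs along the same lines as Theorem \ref{t:Ku'*haveUMAE}, and your plan is precisely that specialization (drop the $N_p$-weight, restrict $x$ to a fixed ball containing $\supp f$). The reduction to the Taylor remainder, the split $|\alpha|\geq k$ versus $|\alpha|<k$, and the treatment of the case $|\alpha|\geq k$ are all fine.

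The genuine gap is at the combinatorial step for $|\alpha|<k$, which is exactly where Theorem \ref{t:Ku'*haveUMAE} invokes $(M.2)$. After inserting $|\varphi^{(\beta)}(\xi)|\leq \norm{\varphi}\,\ell^{k}M_{k}$ you must control
\[
\sum_{\substack{\alpha\leq\beta\\ |\beta|=k}}\frac{M_{\beta}}{M_{\alpha}\,(\beta-\alpha)!}
\]
by $c^{k}\max_{j\leq k}(M_{j}/j!)$. The route used in the paper is $M_{\beta}\leq AH^{|\beta|}M_{\alpha}M_{\beta-\alpha}$ (this \emph{is} $(M.2)$) together with $(\beta-\alpha)!\geq |\beta-\alpha|!\,d^{-|\beta-\alpha|}$, giving $(AHd)^{k}\max_{j\leq k}(M_{j}/j!)$. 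Your proposed shortcut ``simply $M_{k}/M_{\alpha}\leq\max_{j\leq k}M_{j}$'' produces the sequence $\max_{j\leq k}M_{j}$ instead of $\max_{j\leq k}(M_{j}/j!)$, and these differ by as much as $k!$ (take $M_{p}=p!$, where the correct $A_{k}$ is $1$ but your bound gives $k!$); the factor $1/(k-|\alpha|)!$ does not rescue this when $|\alpha|$ is close to $k$. The other alternative you mention (``$M_{\alpha}(\beta-\alpha)!\geq$ appropriate lower bound'') is left unspecified, and under $(M.1)$ alone the needed inequality $M_{k}/(M_{j}(k-j)!)\leq c^{k}\max_{i\leq k}(M_{i}/i!)$ can fail, since $(M.1)$ gives only a lower bound for $M_{k}/M_{j}$ in terms of $M_{k-j}$, not an upper bound. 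So your closing claim that nothing beyond $(M.1)$ is needed is unjustified; you should assume $(M.2)$ (as in Theorem \ref{t:Ku'*haveUMAE}) and run the paper's estimate verbatim. A second, smaller point: in the Beurling case the UMAE requires the bound $\bigoh{\ell^{k}A_{k}/\lambda^{k+d}}$ for \emph{every} $\ell>0$, whereas your final constant is $cR\ell_{0}$ with $\ell_{0}=\max(1,\ell)\geq 1$; to reach arbitrarily small $\ell$ you need the $\varepsilon$-device from the proof of Theorem \ref{t:Ku'*haveUMAE} (measure $\varphi$ in the $\varepsilon\ell$-norm and extract a factor $\varepsilon^{k}$ in each of the cases), not the quantifier bookkeeping of Theorem \ref{t:K'*haveMAE}, where no such geometric factor has to be produced.
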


A standard argument shows the ensuing structural description for $\mathcal{K}'^{\ast}_{\dagger}(\mathbb{R}^{d})$.

\begin{proposition}
		\label{p:structureK*dagger}
		Let $M_{p}$ satisfy $(M.1)$ and $(M.2)'$. Let $f\in\mathcal{K}'^{\ast}_{}(\mathbb{R}^{d})$. Then, for each $q>0$, there is some $\ell=\ell_{q}$ (for each $\ell$ there some $q_{\ell}>0$) such that one can find a multi-sequence of continuous functions $f_{\alpha}=f_{q,\ell, \alpha}\in C(\mathbb{R}^{d})$ for which $					f = \sum_{\alpha \in \naturals^{d}} f_{\alpha}^{(\alpha)} $	and there is $C = C_{q,\ell} > 0$ such that
					\begin{equation}
						\label{eq:MAEstructbound}
						\left| f_{ \alpha}(x) \right| \leq C \frac{\ell^{|\alpha|}}{M_{\alpha}}(1 + |x|)^{|\alpha|}e^{-N(q |x|)} , \qquad x \in \reals^{d} ,\ \alpha\in\mathbb{N}^{d}.
					\end{equation}		
	\end{proposition}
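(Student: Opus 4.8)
The plan is to imitate the proof of Proposition \ref{p:structureK*}, simply carrying along the additional exponential factor $e^{-N(q|x|)}$. Since the functions $f_\alpha$ are permitted to depend on $q$ in the Beurling case and on $\ell$ in the Roumieu case, it suffices to work at a single step of the definition of $\mathcal{K}^{\beurou}_{\dagger}(\mathbb{R}^{d})$: in the Beurling case one fixes $q$ and exploits that the restriction of $f$ to the Fr\'echet space $\varprojlim_{\ell\to 0^+}\mathcal{K}^{M_p,\ell}_{N_p,q}(\mathbb{R}^d)$ is continuous, and in the Roumieu case one fixes $\ell$ and uses continuity of $f$ on $\varprojlim_{q\to 0^+}\mathcal{K}^{M_p,\ell}_{N_p,q}(\mathbb{R}^d)$; the two cases are then strictly parallel, so I shall only describe the Beurling one. (If, instead, one wants a single representation valid for all $\ell$ in the Roumieu case, one runs the same scheme through the dual Mittag--Leffler lemma, exactly as for Proposition \ref{p:structureK*}.)

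After fixing $q$, the first step is to produce $\ell_0>0$ and $C>0$ with $|\ev{f}{\varphi}|\le C\norm{\varphi}_{\mathcal{K}^{M_p,\ell_0}_{N_p,q}(\mathbb{R}^d)}$ for all $\varphi$ in that step. Setting $w_\alpha(x)=(1+|x|)^{|\alpha|}e^{-N(q|x|)}/(\ell_0^{|\alpha|}M_\alpha)$, the map $\varphi\mapsto(w_\alpha\varphi^{(\alpha)})_{\alpha\in\naturals^d}$ realizes the test space isometrically as a subspace of an appropriate weighted space of sequences of continuous functions vanishing at infinity; a Hahn--Banach extension of $f$ to the ambient space is then represented by a sequence of Radon measures $(\mu_\alpha)_\alpha$ with
\[ \sum_{\alpha\in\naturals^d}\ell_0^{|\alpha|}M_\alpha\int_{\mathbb{R}^d}(1+|x|)^{-|\alpha|}e^{N(q|x|)}\,d|\mu_\alpha|(x)<\infty, \]
so that $f=\sum_{\alpha}\partial^\alpha\mu_\alpha$ and each $\mu_\alpha$ has a weighted total variation of the order of $M_\alpha^{-1}\ell^{|\alpha|}(1+|x|)^{|\alpha|}e^{-N(q|x|)}$ --- the decay of $\mu_\alpha$ being inherited directly from the weight $e^{-N(q|x|)}$ present in the norm of $\mathcal{K}^{M_p,\ell_0}_{N_p,q}(\mathbb{R}^d)$.

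Next I would replace the measures by continuous functions. For this, write $\delta=\Delta^k w+w_0$ with $k>d/2$, where $w$ is the cut-off fundamental solution of $\Delta^k$ (continuous because $2k>d$) and $w_0\in\mathcal{D}(\mathbb{R}^d)$; convolving, $\mu_\alpha=\Delta^k(\mu_\alpha\ast w)+\mu_\alpha\ast w_0$ exhibits $\mu_\alpha$ as $\Delta^k g_{1,\alpha}+g_{0,\alpha}$ with $g_{0,\alpha},g_{1,\alpha}$ continuous and still bounded by a fixed multiple of $w_\alpha$, the convolution estimate resting on \eqref{eq:M2'} for $N$ (to compare $e^{-N(q|x|)}$ with $\sup_{|y-x|\le\varepsilon}e^{-N(q|y|)}$); alternatively one may first mollify and then apply Lemma \ref{lemma regularizing L^infinity}. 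Since $\partial^\alpha\Delta^k=\sum_{|\gamma|=k}\binom{k}{\gamma}\partial^{\,\alpha+2\gamma}$, re-indexing the resulting double series gives $f=\sum_\alpha f_\alpha^{(\alpha)}$ with $f_\alpha\in C(\mathbb{R}^d)$, and the bounds \eqref{eq:MAEstructbound} are preserved after one use of $(M.2)'$ to compare $M_{|\alpha|-2k}$ with $M_{|\alpha|}$, at the cost of enlarging $\ell$ by a fixed factor. The converse is routine: that such a series converges and defines an element of $\mathcal{K}'^{\beurou}_{\dagger}(\mathbb{R}^d)$ follows from $(M.2)'$ and \eqref{eq:M2'}.

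The step I expect to be most delicate is the functional-analytic core. One has to choose the indices so that the rate $e^{N(q|x|)}$ of growth allowed for test functions in the step at which $f$ is controlled is matched with the rate $e^{-N(q|x|)}$ of decay required of the $f_\alpha$, and so that the map $\varphi\mapsto(w_\alpha\varphi^{(\alpha)})_\alpha$ genuinely lands in the space of sequences vanishing at infinity --- which forces the argument to be carried out first on the (dense) subspace of test functions of growth strictly below $e^{N(q|x|)}$ and then transported to the whole space by continuity. Threading the weight-sequence bookkeeping (repeated appeals to $(M.2)'$ and \eqref{eq:M2'}, for both $M_p$ and $N_p$, and convergence of the weighted integrals $\int_{\mathbb{R}^d}(1+|x|)^{|\alpha|}e^{-N(q|x|)}\,dx$) consistently through all these adjustments is where the work lies.
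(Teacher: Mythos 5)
Your proposal is correct and is essentially the argument the paper intends: the paper offers no written proof (it is dismissed as ``a standard argument''), and your step-by-step Hahn--Banach representation by measures followed by regularization via a parametrix of $\Delta^{k}$ with $2k>d$ is precisely that route, including the accurate observation that no dual Mittag--Leffler lemma is needed here because the $f_{\alpha}$ are allowed to depend on the step ($q$ in the Beurling case, $\ell$ in the Roumieu case). One small repair: the hypotheses do not grant $(M.2)'$ for $N_{p}$, so in the convolution estimate you should compare $\sup_{|y-x|\le\varepsilon}e^{-N(q|y|)}$ with $e^{-N(q'|x|)}$ for a slightly smaller $q'$ using only the monotonicity of $N$ (which the quantifiers of the statement permit, by starting the argument at a larger parameter), rather than invoking \eqref{eq:M2'} for $N$.
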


Let us now consider the one-dimensional case. The ensuing theorem is a counterpart of Theorem \ref{t:MAEstruct} for the UMAE; notice however that a full characterization is lacking in this case.  We mention that if $(M.1)$ and $(M.3)'$ hold, one verifies that $ \udspacereals{\ast}{d} \hookrightarrow \mathcal{K}^{\ast}_{\dagger}(\reals^{d}) \hookrightarrow \uespacereals{\ast}{d} .$

	\begin{theorem}
		\label{t:UMAE1d}
		Suppose that $N_{p}$ satisfies $(M.1)$ and that $(M.1)$, $(M.2)$, and $(M.3) $ hold for the weight sequence $M_p$. Set $A_p= M_p N_p/p!$.  If $f\in\mathcal{D}'^{\ast}(\mathbb{R})$ satisfies the UMAE in $\mathcal{D}'^{\ast}(\mathbb{R})$ with respect to $\dagger$, then  $f \in \mathcal{K}^{\prime \ast}_{\dagger}(\mathbb{R})$ and if in addition $N_p$ satisfies $(M.2)$ and $\inf_{p\in \mathbb{N}} \sqrt[p]{N_{p}}>0$, the UMAE holds for $f$ in $\mathcal{K}^{\prime \ast}_{\dagger}(\mathbb{R})$ w.r.t. $\sharp$.
		\end{theorem}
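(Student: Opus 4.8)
The plan is to mimic the argument used for Theorem \ref{t:MAEstruct}, replacing the ingredients from the ordinary MAE setting with their UMAE counterparts. Since $f$ satisfies the UMAE in $\mathcal{D}'^{\ast}(\mathbb{R})$ with respect to $\dagger$, it in particular satisfies, for each fixed $q\in\mathbb{N}$, the asymptotic bound $f(\lambda x)=O(\lambda^{-q})$ as $\lambda\to\infty$ in $\mathcal{D}'^{\ast}(\mathbb{R})$, hence also in $\mathcal{D}'^{\ast}(\mathbb{R}\setminus\{0\})$. Thus Proposition \ref{p:quasiasympboundimposesstruct}(i) applies: there exist continuous functions $f_{q,m}$ and $x_0>0$ such that $f=\sum_{m}f_{q,m}^{(m)}$ away from the origin, with $|f_{q,m}(x)|\le C_\ell\,\ell^m|x|^{m}|x|^{-q}/M_m$ for $|x|>x_0$. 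Gluing this with Komatsu's first structural theorem for $\mathcal{E}'^{\ast}$ near the origin (as in the proof of Theorem \ref{t:MAEstruct}) yields a global representation $f=\sum_m f_{q,m}^{(m)}$ with $|f_{q,m}(x)|\le C_{q,\ell}\,\ell^m(1+|x|)^{m-q}/M_m$ on all of $\mathbb{R}$.

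**Improving the decay from polynomial to $e^{-N(q|x|)}$.** The crucial new point is that the polynomial gain $(1+|x|)^{-q}$ must be upgraded to the exponential decay $e^{-N(q|x|)}$ needed for membership in $\mathcal{K}'^{\ast}_{\dagger}(\mathbb{R})$. Here is where the \emph{uniformity} in the UMAE is used: the constant $C_{q,\ell}$ produced above depends on $q$, and the plan is to control its growth in $q$. Tracking through the closed-graph/Banach--Steinhaus argument underlying Proposition \ref{p:quasiasympboundimposesstruct}(i), together with Komatsu's parametrix construction, the dependence of $C_{q,\ell}$ on $q$ is governed by the norm of $f$ as a functional on a space of the form $\mathcal{D}^{M_pL_p,1}_{\overline{B}(0,2R_q)}$, and one reads off from the UMAE estimate \eqref{eq:UMAE} that this norm grows no faster than a constant times $C^q A_q=C^q M_qN_q/q!$ (with $A_p=M_pN_p/p!$ the sequence in the statement). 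Summing the resulting geometric-type series over $q$, and using that $N(t)=\sup_p\log\bigl(t^pN_0/N_p\bigr)$ so that $\inf_q N_q r^{-q}$ behaves like $N_0e^{-N(1/r)}$, one obtains, for each fixed $\ell$, a representation whose coefficients obey $|f_\alpha(x)|\le C\,\ell^{|\alpha|}(1+|x|)^{|\alpha|}e^{-N(q|x|)}/M_\alpha$; this is exactly the structural description from Proposition \ref{p:structureK*dagger}, and hence $f\in\mathcal{K}'^{\ast}_{\dagger}(\mathbb{R})$.

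**Transferring the UMAE to the larger space.** Once $f\in\mathcal{K}'^{\ast}_{\dagger}(\mathbb{R})$ is established, the final claim — that the UMAE continues to hold for $f$ in $\mathcal{K}'^{\ast}_{\dagger}(\mathbb{R})$ with respect to $\sharp$ — is immediate from Theorem \ref{t:Ku'*haveUMAE}, whose hypotheses $(M.1)$ and $(M.2)$ for both $M_p$ and $N_p$ together with $\inf_p\sqrt[p]{N_p}>0$ are precisely the extra assumptions imposed in the second half of the statement, and under which the sequence $A_p=N_p\max_{j\le p}(M_j/j!)$ of that theorem is comparable to $M_pN_p/p!$ by $(M.1)$ for $M_p$ (which forces $M_j/j!$ to be increasing up to an equivalent sequence, so $\max_{j\le p}(M_j/j!)\sim M_p/p!$).

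**Main obstacle.** The delicate step is the bookkeeping in the second paragraph: showing that the constants in the structural representation of $f$ can be taken to grow in $q$ no faster than $O(C^q A_q)$, which requires opening up the otherwise black-boxed proof of Proposition \ref{p:quasiasympboundimposesstruct}(i) and of Komatsu's parametrix theorem to extract explicit $q$-dependence, and then performing the summation over $q$ carefully so that the exponential weight $e^{-N(q|x|)}$ emerges with the right parameter. The interchange of the sum over $q$ with the sum over $\alpha$ and the verification that the glued-together coefficient functions remain continuous also require a modicum of care, but follow the pattern already present in the proof of Theorem \ref{t:MAEstruct} and Proposition \ref{p:structureK*}.
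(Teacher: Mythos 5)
There is a genuine gap in your second paragraph, and it is the heart of the matter. You apply Proposition \ref{p:quasiasympboundimposesstruct}(i) once for each fixed $q$, obtaining a separate representation $f=\sum_m f_{q,m}^{(m)}$ for every $q$, and then propose to ``sum over $q$'' to produce a single representation with coefficients decaying like $e^{-N(q|x|)}$. This is not a valid operation: the representations for different $q$ come from independent Hahn--Banach/parametrix constructions, the functions $f_{q,m}$ for different $q$ bear no controlled relation to one another, and there is no way to combine or interpolate a family of global sums of derivatives into one representation with the pointwise infimum of the bounds. Tracking the growth of the constants $C_{q,\ell}$ in $q$ does not repair this, because the exponential decay must appear in a \emph{single} set of coefficient functions, not as an envelope over infinitely many distinct decompositions. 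The paper's route avoids this entirely: testing against $\varphi\in\mathcal{D}^{\ast}(\mathbb{R}\setminus\{0\})$ kills every moment term, so the uniformity in $k$ of the UMAE error bound gives directly, after taking the infimum over $k$ of $\ell^{k}N_{k}\lambda^{-k-d}$, the single quasiasymptotic bound $\langle f(\lambda x),\varphi(x)\rangle=O\bigl(\lambda^{-(d+1)}\exp(-N_{r_p}(\lambda))\bigr)$. One then applies Proposition \ref{p:quasiasympboundimposesstruct}\emph{(ii)} --- not (i) --- exactly once with this gauge. Part (ii) is indispensable here because $\lambda^{-(d+1)}e^{-N_{r_p}(\lambda)}$ is not $O$-regularly varying, so part (i) is simply inapplicable; this is also precisely why the theorem assumes the stronger conditions $(M.2)$ and $(M.3)$ for $M_p$, a point your argument never accounts for.

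A secondary error: in your last paragraph you justify the comparability of $A_p=N_p\max_{j\le p}(M_j/j!)$ with $M_pN_p/p!$ by claiming that $(M.1)$ forces $M_j/j!$ to be essentially increasing. Log-convexity of $M_p$ gives no such thing. The correct mechanism, used in the paper, is Petzsche's theorem that $(M.3)$ implies the Rudin condition $\max_{j\leq p}(M_{j}/j!)^{1/j}\leq C(M_{p}/p!)^{1/p}$; only then does Theorem \ref{t:Ku'*haveUMAE} yield the final assertion with the sequence $A_p=M_pN_p/p!$ of the statement.
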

			\begin{proof}
	It suffices to show that $f \in \mathcal{K}^{\prime \ast}_{\dagger}(\mathbb{R})$. In the Beurling case we take an arbitrary constant sequence $r_{p}=1/q>0$ and in the Roumieu case an arbitrary $(r_{p})\in\mathfrak{R}$. We have that, whenever $\varphi\in \mathcal{D}^{\ast}(\mathbb{R}^{d}\setminus\{0\})$,
\[ \left| \ev{f(\lambda x)}{\varphi(x)} \right| \leq O\left( \frac{ R_{k-1}N_{k-1}}{\lambda^{k+d} N_{0}} \right), \]
which implies, taking infimum over $k$,
\[\left| \ev{f(\lambda x)}{\varphi(x)} \right| = O\left( \lambda^{-(d+1)}\exp\left(-N_{r_{p}}( \lambda)\right)\right) .\]
Appliying Proposition \ref{p:quasiasympboundimposesstruct}(ii), we can write $f=\sum_{m\in\mathbb{N}}f^{(m)}_{m}$ with continuous functions $f_m$ satisfying the bounds 
\[\left| f_{ m}(x) \right| \leq C_{\ell} \frac{\ell^{m}}{M_m}(1 + |x|)^{m-d-1}e^{-N_{r_q}( |x|)} , \qquad x \in \reals ,\ m\in\mathbb{N},\]
for some $\ell>0$ (for each $\ell>0$), where $N_{r_p}$ stands for the associated function of $R_pN_p$. This yields $f \in \mathcal{K}^{\prime \ast}_{\dagger}(\mathbb{R})$ in both cases, as required. (In the Roumieu case we apply \cite[Lemma~4.5(i), p.~417]{D-V-V}.)
 It has been proved by Petzsche \cite[Proposition 1.1]{Petzsche1988} that $(M.3)$ implies the so-called Rudin condition, namely, there is $C$ such that
\[
\max_{j\leq p} \left(\frac{M_{j}}{j!}\right)^{1/j}\leq C \left(\frac{M_{p}}{p!}\right)^{1/p}, \qquad p\in\mathbb{N}
 ;
 \] therefore, the rest follows from Theorem \ref{t:Ku'*haveUMAE}.
	\end{proof}

\end{document}